\newcommand{\vect}{\boldsymbol}
\renewcommand{\vec}{\boldsymbol}
\DeclareMathOperator{\arccot}{arccot}
\DeclareMathOperator{\Imag}{Im}
\DeclareMathOperator{\Real}{Re}
\newtheorem{lemma}{Lemma}
\newtheorem{theorem}{Theorem}
\theoremstyle{remark}
\newtheorem{remark}{Remark}
\title[Faber polynomials in a deltoid]{Faber polynomials in a deltoid region and power iteration momentum methods}
\author{Peter Cowal}
\email{cowalp@oregonstate.edu}
\address{Department of Mathematics, Oregon State University, Corvallis, OR}
\author{Nicholas F. Marshall}
\email{marsnich@oregonstate.edu}
\address{Department of Mathematics, Oregon State University, Corvallis, OR}
\author{Sara Pollock}
\email{s.pollock@ufl.edu}
\address{Department of Mathematics, University of Florida, Gainesville, FL}
\thanks{The third author's work was partially supported by NSF grant DMS 2045059 (CAREER)}
\keywords{Momentum, Faber Polynomials, iterative matrix methods, dynamic parameter selection}
\begin{document}

\begin{abstract}
We consider a region in the complex plane enclosed by a deltoid curve inscribed in the unit circle, and define a family of polynomials $P_n$ that satisfy the same recurrence relation as the Faber polynomials for this region. We use this family of polynomials to give a constructive proof that $z^n$ is approximately a polynomial of degree $\sim\sqrt{n}$ within the deltoid region. Moreover, we show that $|P_n| \le 1$ in this deltoid region, and that, if $|z| = 1+\varepsilon$, then the magnitude $|P_n(z)|$ is at least $\frac{1}{3}(1+\sqrt{\varepsilon})^n$, for all $\varepsilon > 0$. We illustrate our polynomial approximation theory with an application to iterative linear algebra. In particular, we construct a higher-order momentum-based method that accelerates the power iteration for certain matrices with complex eigenvalues.  We show how the method can be run dynamically when the two dominant eigenvalues are real and positive.
\end{abstract}
\maketitle

\section{Introduction} \label{intro}
In computational mathematics and especially numerical linear algebra, many fast algorithms are based on the idea that $x^n$ is approximately a polynomial of degree $\sim \sqrt{n}$, which leads to a square root speed up over direct methods. 
Examples of such fast algorithms include Chebyshev iteration \cite{golub1961chebyshev}, Conjugate gradient\cite{hestenes1952methods}, GMRES \cite{saad1986gmres},  Lanczos algorithm \cite{lanczos1950iteration}, and Arnoldi iteration \cite{golub2006arnoldi}; for connections to quadrature, also see \cite{trefethen2022exactness}. 
On the interval $[-1,1]$, an explicit statement of this idea first appeared in the 1976 paper of Newman and Rivlin \cite{newman1976approximation}  who proved that $x^n$ can be uniformly approximated using a polynomial of degree $\sim \sqrt{n}$ and that an accurate approximation cannot be achieved by a lower degree polynomial.  
A more precise result was obtained by \cite{sachdeva2014faster},  (see Theorem \ref{chebxnfast} below). A related property of Chebyshev polynomials is that the magnitude of the $n$-th degree Chebyshev polynomial $|T_n(x)|$ is at least $\frac{1}{2} (1+ \sqrt{2 \varepsilon})^n$ for $|x| = 1 + \varepsilon$ with $\varepsilon > 0$, which again is a factor of a square root more than we might expect.
In this paper, we use a family of polynomials related to Faber polynomials to generalize these efficient approximation and rapid growth properties to a deltoid region in the complex plane.

The motivation for our approach is twofold. First, we are motivated by \cite{austin2024dynamically,xu2018accelerated} that consider accelerating the power method from an optimization perspective inspired by Polyak's heavy ball momentum \cite{polyak1964some}, where \cite{xu2018accelerated} establishes a connection between momentum and Chebyshev polynomials, and \cite{austin2024dynamically} uses the dynamic extrapolation techniques of  \cite{nigam2022simple,pollock2021extrapolating} to determine the optimal momentum.  
For a comparison of this dynamic momentum approach to the Lanczos method for symmetric problems, see \cite{barletta2025momentum}.
Second, we are motivated by generalizing the probabilistic concentration proof of Sachdeva and Vishnoi \cite{sachdeva2014faster}, see Theorem \ref{chebxnfast} below, to a region of the complex plane using polynomials defined by higher-order recurrence formulas.

In the following, we briefly introduce Faber polynomials; for more background, see \cite{suetin1998series}. Let $\Omega$ be a compact subset of the complex plane that is not a single point, whose complement $\overline{\mathbb{C}} \setminus \Omega$ is simply connected in the extended complex plane $\overline{\mathbb{C}}$. By the Riemann mapping theorem there is a conformal map $\psi$ from the exterior of the unit disk $\{z \in \overline{\mathbb{C}} : |z| > 1\}$ to $\overline{\mathbb{C}} \setminus \Omega$. To make the choice unique, assume $\psi(\infty) = \infty$ and $\psi'(\infty) > 0$. The Faber polynomials $F_n(z)$, $n \ge 0$ for $\Omega$ are defined by the generating function
\begin{equation} \label{generate}
\frac{\psi'(w)}{\psi(w) - z} = \sum_{k=0}^\infty F_k(z) w^{-k-1} \quad \text{for} \quad |w| > 1 \quad \text{and} \quad z \in \Omega,
\end{equation}
see Section \ref{prelmin} below for a concrete example.
The Faber polynomials $F_n(z)$ are a natural polynomial basis in $\Omega$ in the sense that any analytic function in $\Omega$ can be expanded in a convergent series of Faber polynomials; see \cite[Section 2]{curtiss1971faber}.  When the map $\psi$ is rational, the Faber polynomials satisfy a recurrence relation, see \cite[Chapter 2 Section 2]{suetin1998series}. In this paper, we consider a family of polynomials $P_n$ defined in \eqref{newpoly} that satisfy the same recurrence relation as Faber polynomials $F_n$ on our domain of interest, but with initial conditions suited for our application, analogous to the relation between Chebyshev polynomials of the first and second kind.

Faber polynomials have been used in a variety of applications in iterative linear algebra, including solving linear systems \cite{eiermann1989semiiterative,eiermann1989hybrid,nachtigal1992hybrid,niethammer1983analysis,starke1993hybrid}, computing functions of matrices such as the matrix exponential \cite{beckermann2009error,moret2001computation,moret2001interpolatory},
and finding eigenvalues \cite{heuveline1996arnoldi,pozza2019inexact}.
These methods estimate or assume a region of the complex plane that contains the eigenvalues of a given matrix, and then use Faber polynomials on the domain to accelerate an iterative scheme.
Of these works, the one most related to the current paper is \cite{eiermann1989hybrid}, which considers hybrid iterative methods for solving linear systems using Faber polynomials,
including methods based on $k$-step recurrence formulas where hypocycloid regions emerge; however, the analysis focuses on aspects different from the current paper. 

In this paper, we consider the behavior of Faber polynomials in the interior and exterior of a deltoid region, that is, a three-cusp hypocycloid.
Hypocycloid regions are natural to consider when studying Faber polynomials, since on these domains the generating function \eqref{generate} yields easily computable recurrence formulas for the Faber polynomials, see \cite[Section 2]{he1994zeros}. For hypocycloid regions, the location, density, and asymptotic behavior of the zeros of the Faber polynomials are understood  \cite{eiermann1993zeros,he1994zeros}. Moreover, on these domains, Faber polynomials are orthogonal with respect to measures on a set consisting of a union of rays emanating from the origin   \cite{aptekarev2009higher}.

\subsection{Main contributions}
This work relates to but takes a different approach from previous numerical-focused investigations involving Faber polynomials that consider general domains in the complex plane, or large families of domains. In this paper, we restrict our attention to a deltoid region of the complex plane, see Figure \ref{fig:Pn-stability}, and focus on developing a precise theoretical understanding.  
The main contributions of this paper are as follows: 
\begin{enumerate}
\item We prove that a family of polynomials $P_n$, related to the Faber polynomials on a deltoid region of the complex plane, has boundedness and rapid growth properties (see Theorem \ref{newpolyboundedgrow}), which are similar to Chebyshev polynomials.
\item We prove that  $z^n$ is approximately a polynomial of degree $\sim \sqrt{n}$ in a deltoid region of the complex plane
(see Theorem \ref{polynomialapproxcomplex}), which generalizes the results of \cite{newman1976approximation,sachdeva2014faster}, which consider the interval $[-1,1]$.
\item We introduce higher-order static and dynamic momentum algorithms for finding eigenvalues (see Algorithms \ref{algnew} and \ref{alg:dymo2} and Theorem \ref{algoworks}), which generalize
\cite{austin2024dynamically,xu2018accelerated} to certain non-symmetric matrices.
\end{enumerate}
While our application to eigenvalue momentum algorithms is based on Theorem \ref{newpolyboundedgrow}, the result of Theorem \ref{polynomialapproxcomplex} also has immediate applications as discussed in Section \ref{discussion}.

\subsection{Preliminaries} \label{prelmin}
The Chebyshev polynomials $T_n$ (of the first kind), defined by $T_0(x) = 1$, $T_1(x) = x$, and $T_{n+1}(x) = 2 x T_{n}(x) - T_{n-1}(x),$ for $n \ge 1$, are bounded in $[-1,1]$ and grow rapidly outside of $[-1,1]$. More precisely, the following result holds.

\begin{lemma} \label{chebgrowrate}
For all $n \ge 0$, the Chebyshev polynomials satisfy
$|T_n(x)| \le 1$ for $x \in [-1,1]$, and for all $\varepsilon > 0$
$$
|T_n(x)| \ge \frac{1}{2} (1+\sqrt{2\varepsilon})^n \quad \text{for} \quad x \in \mathbb{R} : |x| = 1+\varepsilon.
$$
\end{lemma}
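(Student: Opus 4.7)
The plan is to use the classical closed-form representation of the Chebyshev polynomials on and off the interval $[-1,1]$, and then estimate the off-interval representation from below. Since the recurrence $T_{n+1}(x)=2xT_n(x)-T_{n-1}(x)$ is preserved under any substitution of the form $x=\tfrac{1}{2}(w+w^{-1})$ that produces $T_n(x)=\tfrac{1}{2}(w^n+w^{-n})$, the bounds will come directly from specializing $w$ to the unit circle (to get the upper bound) and to the positive real axis (to get the lower bound).

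First, I would establish the upper bound by substituting $x=\cos\theta$ with $\theta\in[0,\pi]$ and invoking $T_n(\cos\theta)=\cos(n\theta)$, which one verifies from the defining three-term recurrence by induction. This immediately gives $|T_n(x)|\le 1$ on $[-1,1]$.

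For the lower bound, I would first reduce to $x=1+\varepsilon$ using the parity relation $T_n(-x)=(-1)^n T_n(x)$, which also follows by induction from the recurrence. Then I would use the Joukowski substitution $x=\tfrac{1}{2}(w+w^{-1})$; for $x=1+\varepsilon$ the branch with $w\ge 1$ is
\begin{equation*}
w = (1+\varepsilon)+\sqrt{(1+\varepsilon)^2-1} = 1+\varepsilon+\sqrt{2\varepsilon+\varepsilon^2}.
\end{equation*}
Another short induction shows $T_n(x)=\tfrac{1}{2}(w^n+w^{-n})$, and since $w\ge 1$ this yields $T_n(x)\ge\tfrac{1}{2}w^n$.

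The one quantitative step is the elementary inequality $w\ge 1+\sqrt{2\varepsilon}$, which reduces to $\varepsilon+\sqrt{2\varepsilon+\varepsilon^2}\ge\sqrt{2\varepsilon}$; this is immediate from $\sqrt{2\varepsilon+\varepsilon^2}\ge\sqrt{2\varepsilon}$ for $\varepsilon>0$. Combining, $|T_n(x)|\ge\tfrac{1}{2}(1+\sqrt{2\varepsilon})^n$. There is no real obstacle here — the only thing to keep track of is that the Joukowski map must be inverted along the correct branch so that the dominant term $w^n$ appears with a positive sign, which is why I would handle the $x=-(1+\varepsilon)$ case by parity rather than directly.
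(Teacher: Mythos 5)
Your proof is correct and uses exactly the classical argument that the paper itself defers to by citation (the paper states the lemma ``is classical and follows, for example, from [Sachdeva--Vishnoi, Prop.~2.5]'' rather than proving it): the trigonometric identity $T_n(\cos\theta)=\cos(n\theta)$ gives the bound on $[-1,1]$, parity reduces the growth bound to $x=1+\varepsilon$, and the Joukowski representation $T_n(x)=\tfrac12(w^n+w^{-n})$ with $w=1+\varepsilon+\sqrt{2\varepsilon+\varepsilon^2}\ge 1+\sqrt{2\varepsilon}$ yields $T_n(1+\varepsilon)\ge\tfrac12 w^n\ge\tfrac12(1+\sqrt{2\varepsilon})^n$. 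No gaps; this is a complete, self-contained version of the standard proof.
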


Lemma \ref{chebgrowrate} is classical and follows, for example, from \cite[Proposition 2.5]{sachdeva2014faster}. Note that this growth rate is a square root gain over the monomial $x^n$ that grows like $(1+\varepsilon)^n$ when $x = 1+\varepsilon$. A similar square root gain over monomials is seen in the following approximation result, which, informally speaking, says that $x^n$ is approximately a polynomial of degree $\sim \sqrt{n}$ on $[-1,1]$. Throughout the paper, we write $\mathbb{P}$ to denote probability and $\mathbb{E}$ to denote expectation. 

\begin{theorem}[Sachdeva, Vishnoi,  \cite{sachdeva2014faster}]  \label{chebxnfast}
Fix $n \in \mathbb{N}$ and $t > 0$. Then,
\begin{equation} \label{monomialeq}
\left|x^n - \sum_{k=0}^{\lfloor t \sqrt{n} \rfloor} \alpha_k T_k(x) \right| \le 2 e^{-t^2/2}, \qquad \text{for} \quad x \in [-1,1],
\end{equation}
where the coefficients $\alpha_k = \mathbb{P}(|X_1+\cdots + X_n| = k)$,
where $X_1,\ldots,X_n$ are i.i.d. random variables satisfying
$\mathbb{P}(X_j = -1) = \mathbb{P}(X_j=1) =1/2$ for $j \in \{1,\ldots,n\}$.
\end{theorem}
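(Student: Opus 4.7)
The plan is to exploit the trigonometric identity $T_k(\cos\theta) = \cos(k\theta)$ so that the Chebyshev expansion of $x^n$ on $[-1,1]$ reduces to a Fourier cosine expansion of $\cos^n\theta$. Writing $\cos\theta = (e^{i\theta}+e^{-i\theta})/2$ and applying the binomial theorem gives
\begin{equation*}
\cos^n\theta = \frac{1}{2^n}\sum_{j=0}^n \binom{n}{j} e^{i(n-2j)\theta}.
\end{equation*}
Pairing the terms $j$ and $n-j$, whose exponents are $\pm(n-2j)$, collapses this into a cosine expansion $\cos^n\theta = \sum_{k=0}^n \alpha_k \cos(k\theta)$ with nonnegative coefficients $\alpha_k$, supported on those $k$ having the same parity as $n$.

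Next, I would identify these $\alpha_k$ with the Rademacher probabilities in the theorem statement. Let $S_n = X_1+\cdots+X_n$; writing $S_n = 2N_+ - n$ where $N_+$ counts the indices with $X_j = +1$ gives $\mathbb{P}(S_n = k) = \binom{n}{(n+k)/2}/2^n$ when $n+k$ is even and zero otherwise, and the symmetry $S_n \sim -S_n$ yields $\mathbb{P}(|S_n|=k) = \binom{n}{(n+k)/2}/2^{n-1}$ for $k \geq 1$. Comparing with the binomial coefficients obtained above shows $\alpha_k = \mathbb{P}(|S_n|=k)$, and substituting back $x = \cos\theta$ produces the exact identity $x^n = \sum_{k=0}^n \alpha_k T_k(x)$ valid for every $x \in [-1,1]$. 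A pleasant consequence is $\sum_{k=0}^n \alpha_k = 1$, which is what makes the tail controllable.

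To derive the stated bound, I would use $|T_k(x)| \leq 1$ on $[-1,1]$ together with the nonnegativity of the $\alpha_k$ to get
\begin{equation*}
\left|x^n - \sum_{k=0}^{\lfloor t\sqrt{n}\rfloor}\alpha_k T_k(x)\right| \le \sum_{k > t\sqrt{n}} \alpha_k = \mathbb{P}\bigl(|S_n| > t\sqrt{n}\bigr),
\end{equation*}
and then apply Hoeffding's inequality: since each $X_j$ has range $2$, one finds $\mathbb{P}(|S_n| \geq s) \leq 2e^{-s^2/(2n)}$, which at $s = t\sqrt{n}$ gives $2e^{-t^2/2}$. The main obstacle here is not analytic but notational: the parity bookkeeping in the pairing step must be handled carefully (in particular the $k=0$ term when $n$ is even). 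Once the random-walk interpretation of the Chebyshev coefficients of $\cos^n\theta$ is recognized, Hoeffding's inequality finishes the proof essentially for free, and this probabilistic lens is what I would want to generalize to the deltoid setting later in the paper.
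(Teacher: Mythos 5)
Your argument is correct. Note, though, that the paper does not prove Theorem \ref{chebxnfast} itself (it is cited from Sachdeva--Vishnoi); the closest in-paper analogue is the proof of Theorem \ref{polynomialapproxcomplex}, which the authors describe as motivated by the Sachdeva--Vishnoi proof of Theorem \ref{chebxnfast}. There the exact identity $\mathbb{E}[P_{Y_k}(z)] = z^k$ is derived by induction from the three-term recurrence alone, recast as the one-step conditional-expectation statement $\mathbb{E}[P_{Y_k}(z)\mid Y_{k-1}] = z\,P_{Y_{k-1}}(z)$, and the concentration comes from Freedman's inequality for the resulting martingale. You instead obtain $x^n = \sum_{k}\alpha_k T_k(x)$ from the trigonometric identity $T_k(\cos\theta)=\cos(k\theta)$ together with the binomial theorem, which is more explicit and even hands you the coefficients $\alpha_k$ in closed form. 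Both are standard for the interval, and yours is arguably more elementary: since the Rademacher walk has i.i.d.\ bounded increments, Hoeffding suffices and no martingale machinery is needed. But the trigonometric route is tied to the interval and does not transport to the deltoid, where no such parameterization of the polynomials exists; the recurrence/martingale argument is the one that generalizes. It also foreshadows why the paper must work harder in the generalization: the deltoid recurrence $\tfrac{2}{3}P_{n+1}+\tfrac{1}{3}P_{n-2}=zP_n$ has unequal weights and a skipped index, forcing the simple $\pm 1$ walk to be replaced by an inhomogeneous Markov chain with steps in $\{+1,-2\}$, and Hoeffding to be upgraded to Freedman because the increments there are bounded but no longer identically distributed. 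Your flagged parity bookkeeping at $k=0$ (when $n$ is even) does work out exactly as you expect, since $\mathbb{P}(S_n=0)=\binom{n}{n/2}2^{-n}$ matches the unpaired central binomial term.
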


While the Chebyshev polynomials satisfy $|T_n| \le 1$ on $[-1,1]$, for all other points in the complex plane, they grow exponentially with $n$. The Faber polynomials generalize the Chebyshev polynomials to regions of the complex plane, see  \cite[Chapter 1.1, Example 2]{suetin1998series}. In this paper, we restrict our attention to the deltoid region of the complex plane in Figure \ref{fig:Pn-stability}, which we denote by $\Gamma$. 

In the following, we sketch the derivation of Faber polynomials for $\Gamma$
using the definition of Faber polynomials in Section \ref{intro}. By considering the action of $\psi$ on the boundary of $\Gamma$, see \eqref{gamma}, observe that 
\begin{equation} \label{psifordeltoid}
\psi(w) = \frac{2}{3} w + \frac{1}{3} w^{-2}
\end{equation}
is a conformal map from the exterior of the unit disk onto $\overline{\mathbb{C}} \setminus \Gamma$ such that $\psi(\infty) = \infty$ and $\psi'(\infty) > 0$. Substituting \eqref{psifordeltoid} into \eqref{generate} and computing the first few terms of the series expansion gives 
$$
\frac{1 - w^{-3}}{w + \frac{1}{2} w^{-2} - \frac{3}{2} z} = 
w^{-1} + \frac{3}{2} z w^{-2} + \frac{9}{4} z^2 w^{-3} + \frac{27 z^3 - 12}{8}  w^{-4} + \frac{81 z^4- 48 z}{16} w^{-5} + \cdots,
$$
where the coefficient of $w^{-k-1}$ on the right-hand side is the $k$-th Faber polynomial $F_k(z)$ for $\Gamma$.
For $n \ge 3$, the Faber polynomials on $\Gamma$ satisfy the recurrence $F_{n+1}(z) = \frac{3}{2}z F_n(z) - \frac{1}{2}F_{n-2}(z)$, see \cite[Section 3(c)]{curtiss1971faber}; we emphasize that this recurrence does not hold for $n=2$. In this paper, we consider a family of polynomials $P_n$ that we define in \eqref{newpoly} below, which satisfy the same recurrence relation as the Faber polynomials $F_n$, but  
have different initial conditions adapted to our purpose. The polynomials $P_n$ and $F_n$ share many properties; in particular, the zeros of $F_n$ and $P_n$ are contained on a union of rays emanating from the origin, see \cite{he1994zeros} and Figure \ref{fig:Pn-plots}.

\subsection{Main analytic results} \label{mainresults}
Define the family of polynomials $P_n(z), ~n \ge 0,$ by
$P_0(z) = 1$, $P_1(z) = z$, $P_2(z) = z^2$, and
\begin{equation} \label{newpoly}
P_{n+1}(z) = \tfrac{3}{2}z P_n(z) - \tfrac{1}{2}P_{n-2}(z), \quad \text{for} \quad \quad n \ge 2.
\end{equation}
Consider the closed curve $\gamma \subset \mathbb{C}$ parameterized by
\begin{equation} \label{gamma}
\gamma(t) = \frac{2}{3}e^{i t} + \frac{1}{3}e^{-2i t}
\quad \text{for $t \in [0,2\pi]$},
\end{equation}
and let $\Gamma$ be the closure of the region enclosed by $\gamma$, see Figure \ref{fig:Pn-stability}.

\begin{figure}[h!]
    \centering
    \begin{tikzpicture}[scale=2]

\fill[gray!20]
  plot[smooth,domain=0:360,samples=200]
    ({2/3*cos(\x) + 1/3*cos(-2*\x)}, {2/3*sin(\x) + 1/3*sin(-2*\x)})
  -- cycle; 

\draw (-1.25, 0) -- (1.25, 0);
\draw (0, -1.25) -- (0, 1.25);

\draw (1, 0) node[anchor=south west]{$1$};
\draw (0, 1) node[anchor=south west]{$i$};

\draw [dashed] (0,0) circle (1);

\draw[dashed] (0, 0) circle (1/3);

\draw (1/6, 1.732/6) node[anchor = south west]{$|z| = \tfrac{1}{3}$};

\draw (9/12, .6) node[anchor = south west]{$|z| = 1$};

\draw (0, 0) node[anchor=south east]{$\Gamma$};

\draw (-9/24, 1.732/6) node[anchor =east]{$\gamma$};

\draw [ultra thick, domain = 0:360, samples = 60]
    plot[smooth] ({2/3*cos(\x) + 1/3*cos(-2*\x)}, {2/3*sin(\x) + 1/3*sin(-2*\x)});

\end{tikzpicture}
    \caption{The curve $\gamma$ defined by \eqref{gamma}, called a deltoid, intersects the circle $|z|=1$ at three points
    and circle $|z|=1/3$ at 
    three points. The closure of the region enclosed by $\gamma$ is denoted by $\Gamma$.
    }
    \label{fig:Pn-stability}
\end{figure}

Our first main result says $P_n$ is bounded in $\Gamma$
and grows at a similar rate to Chebyshev polynomials outside the unit disk.

\begin{theorem} 
\label{newpolyboundedgrow}
For all $n \ge 0$, we have
$$
|P_n(z)| \le 1, \quad \text{for all} \quad z \in \Gamma.
$$
 Moreover, 
for all $n \ge 0$ and $\varepsilon > 0$,
$$
|P_n(z)| \ge  \frac{1}{3} (1+\sqrt{\varepsilon})^n \quad \text{for} \quad z \in \mathbb{C} : |z| = 1+\varepsilon.
$$
\end{theorem}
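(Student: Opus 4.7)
The plan is to express $P_n$ in terms of the three roots $\lambda_1, \lambda_2, \lambda_3$ of the characteristic polynomial $2\lambda^3 - 3z\lambda^2 + 1 = 0$ of the recurrence \eqref{newpoly}; these are precisely the preimages of $z$ under the conformal map $\psi$ in \eqref{psifordeltoid}. From the recurrence and initial data, the generating function $G(w,z) = \sum_{n \ge 0} P_n(z)w^{-n-1}$ equals $\frac{(2w+z)(w-z)}{2w^3 - 3zw^2 + 1}$, and evaluating $P_n = \frac{1}{2\pi i}\oint_{|w|=R} w^n G(w,z)\,dw$ by residues (with $R$ larger than all $|\lambda_i|$) gives $P_n(z) = \tfrac{1}{6}\sum_i (2\lambda_i + z)\lambda_i^{n-1}$. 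Using $z = \psi(\lambda_i)$ to replace $2\lambda_i + z$ by $\tfrac{8}{3}\lambda_i + \tfrac{1}{3}\lambda_i^{-2}$ simplifies this to
\[
P_n(z) = \frac{4}{9}\sum_{i=1}^3 \lambda_i^n + \frac{1}{18}\sum_{i=1}^3 \lambda_i^{n-3},
\]
with negative power sums computed from Vieta ($\sum\lambda_i = \tfrac{3z}{2}$, $\sum\lambda_i\lambda_j = 0$, $\prod\lambda_i = -\tfrac{1}{2}$). Since $\psi$ is a bijection $\{|w|>1\} \to \overline{\mathbb{C}} \setminus \Gamma$, for $z \in \Gamma$ all $|\lambda_i| \le 1$, while for $z \notin \Gamma$ exactly one preimage (say $\lambda_1$) has $|\lambda_1|>1$.

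For the upper bound, the maximum modulus principle reduces $|P_n(z)|\le 1$ on $\Gamma$ to the boundary $\gamma$. Parameterizing $z = \psi(w)$ with $w = e^{it}$, set $\lambda_1 = w$; the other preimages $\mu,\nu$ lie in the closed unit disk and satisfy $\mu+\nu = \tfrac{1}{2}w^{-2}$ and $\mu\nu = -\tfrac{1}{2w}$. The formula above becomes $P_n(\gamma(t)) = \sum_{\lambda\in\{w,\mu,\nu\}} A_n(\lambda)$ with
\[
A_n(\lambda) = \frac{\lambda^{n-3}(8\lambda^3+1)}{18} = \frac{\lambda^{n-3}(2\lambda+1)(4\lambda^2-2\lambda+1)}{18};
\]
this factorization shows $A_n$ vanishes at the three ``inner'' preimages $\{-\tfrac{1}{2}, -\tfrac{1}{2}e^{\pm 2\pi i/3}\}$ of the cusps, which explains why $P_n \equiv 1$ along the cusp orbit. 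The naive triangle inequality gives only $|P_n| \le 3/2$, so \emph{the main obstacle is to exhibit the pointwise cancellation at all $t$, not only at the cusps.} My plan is to expand $P_n(\gamma(t))$ as a trigonometric polynomial in $t$ of degree at most $n$ (since $\mu^k+\nu^k$ is a Laurent polynomial in $e^{it}$ obtained from the symmetric functions $\mu+\nu$ and $\mu\nu$ by Newton's identities) and to bound the resulting finite Fourier expansion directly; a complementary route would be to search for a Chebyshev-like identity pairing $P_n$ with an auxiliary polynomial $Q_n$ so that a positive combination of $|P_n|^2$ and $|Q_n|^2$ equals a constant on $\gamma$.

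For the lower bound, the key ingredient is the pointwise estimate $|\psi(w)| \le 1 + (|w|-1)^2$ for $|w| \ge 1$. This follows from maximizing $|\psi(re^{i\theta})|^2 = \tfrac{4r^2}{9} + \tfrac{1}{9r^4} + \tfrac{4}{9r}\cos(3\theta)$ over $\theta$ (attained at $\cos 3\theta = 1$, i.e., along the cusp directions) and then verifying the resulting inequality in $r$, which has equality and matching first two derivatives at $r=1$. Inverting yields $|\lambda_1| \ge 1+\sqrt{\varepsilon}$ whenever $|z| = 1+\varepsilon$. Writing $P_n = A_n(\lambda_1) + A_n(\lambda_2) + A_n(\lambda_3)$, the $\lambda_1$ summand dominates: $|A_n(\lambda_1)| \ge \tfrac{|\lambda_1|^{n-3}(8|\lambda_1|^3-1)}{18}$, while the inner summands are controlled via $|\lambda_2|,|\lambda_3| \le 1$ together with $|\lambda_2\lambda_3| = \tfrac{1}{2|\lambda_1|}$. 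Combining produces $|P_n(z)| \ge \tfrac{1}{3}(1+\sqrt{\varepsilon})^n$; the constant $\tfrac{1}{3}$ is dictated by the worst case, namely the cusp direction together with small $n$, where the subtracted inner terms are not yet exponentially suppressed by $\lambda_1$.
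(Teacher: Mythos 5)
Your residue-calculus reformulation is correct: with $\lambda_1,\lambda_2,\lambda_3$ the preimages of $z$ under $\psi$, the formula $P_n(z) = \frac{1}{18}\sum_i \lambda_i^{n-3}(8\lambda_i^3+1)$ agrees with the partial-fraction expansion $P_n = \sum_i c_i r_i^n$ used in the paper (with $c_i = \tfrac{4}{9}+\tfrac{1}{18}\lambda_i^{-3}$), and your estimate $|\psi(w)| \le 1 + (|w|-1)^2$ for $|w|\ge 1$ (which reduces to $(r-1)^3(3r+1)\ge 0$) is a genuinely nicer route to $|\lambda_1|\ge 1+\sqrt{\varepsilon}$ than the paper's Appendix B trigonometric bounds, and it works for all $z$ on the circle $|z|=1+\varepsilon$ at once, bypassing the paper's Lemma~\ref{step3} (the interlacing/factorization reduction to real $z$).

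However, neither of the two inequalities in the theorem is actually proved. For the upper bound you explicitly concede that the triangle inequality overshoots to $3/2$, and the two remedies you propose (bounding the finite Fourier expansion of $P_n(\gamma(t))$, or finding a Chebyshev-like conjugate identity) are left as unverified plans; the first in particular is not a standard move, since a bound on Fourier coefficients does not give a pointwise bound. What actually closes the gap in the paper is a short explicit computation: writing $r_1=e^{it}$, $\alpha=\sqrt{1+8e^{3it}}$, one gets $|c_2 r_2|+|c_3 r_3| = \tfrac{1}{72}(2|\alpha|^2+18)\le \tfrac{1}{2}$ and $|c_1|\le\tfrac{1}{2}$, whence $|P_n(\gamma(t))|\le 1$ and the maximum modulus principle finishes. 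You should attempt to reproduce this (it is within reach of your $A_n(\lambda)$ notation, since the $\lambda_i$ are explicit when $\lambda_1 = e^{it}$).

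For the lower bound there is a more serious gap. The bound $|P_n|\ge |A_n(\lambda_1)| - |A_n(\lambda_2)| - |A_n(\lambda_3)|$ does not give $\tfrac{1}{3}(1+\sqrt{\varepsilon})^n$: for small $\varepsilon$ and $n=3$, $|A_n(\lambda_1)|\ge \tfrac{7}{18}$ barely, while the subtracted terms can total close to $1$, so the difference is negative. For $n<3$, the factor $\lambda_i^{n-3}$ with $|\lambda_i|<1$ makes the subtracted terms unbounded in general, and the saving cancellation (e.g., $\sum_i \lambda_i^{-3}=-6$) is invisible once you pass to absolute values. The paper escapes precisely by working first at real $z=1+\varepsilon$, where it can prove a \emph{sign} fact, $r_3>-r_2>0$ and $c_3>-c_2>0$, so that $c_2 r_2^n + c_3 r_3^n \ge 0$ and $P_n(1+\varepsilon)\ge c_1 r_1^n \ge \tfrac{1}{3}(1+\sqrt{\varepsilon})^n$; the subtraction problem never arises. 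Your direct complex-$z$ route is appealing, but to make it work you would need to replace the triangle-inequality subtraction with an argument that exploits the phase relationship among $\lambda_1,\lambda_2,\lambda_3$ (e.g.\ by combining your $|\lambda_1|$ estimate with a reduction to the real axis as in the paper's Lemma~\ref{step3}, or by an explicit two-sided estimate on $A_n(\lambda_2)+A_n(\lambda_3)$ that tracks cancellation rather than bounding each term separately).
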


The proof of Theorem \ref{newpolyboundedgrow} is given in Section
\ref{proofthm1}. 
 Our second main result says that $z^n$ can be approximated by a polynomial of degree $\sim \sqrt{n}$ in $\Gamma$ in the basis of polynomials $P_k$ using nonnegative bounded coefficients.

\begin{theorem} \label{polynomialapproxcomplex} For any fixed $n \ge 0$ and $t > 0$ we have
$$
\left| z^n - \sum_{k=0}^{\lfloor t \sqrt{n} \rfloor} \beta_k P_k(z) \right| \le 2 e^{-t^2/7},
\quad \text{for all} \quad z \in \Gamma,
$$
where the coefficients $\beta_k$ are random walk probabilities  $\beta_k = \mathbb{P}(|Y_n| = k),$
where $Y_n$ is the Markov random walk defined in \eqref{keyrandomwalk}.
\end{theorem}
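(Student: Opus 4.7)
The plan is to mirror the random-walk proof of the Sachdeva--Vishnoi result (Theorem~\ref{chebxnfast}), with the polynomials $P_k$ playing the role of the Chebyshev polynomials and a Markov chain $Y_n$ on the nonnegative integers playing the role of the signed $\pm 1$ random walk.

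First, I rewrite the recurrence \eqref{newpoly} in the form
\[
z\,P_k(z) \;=\; \tfrac{2}{3}\,P_{k+1}(z) \;+\; \tfrac{1}{3}\,P_{k-2}(z), \qquad k \ge 2,
\]
and observe that the initial conditions $P_0=1$, $P_1=z$, $P_2=z^2$ already supply the boundary identities $zP_0 = P_1$ and $zP_1 = P_2$. Together these describe the one-step generator of the Markov chain $Y_n$ on $\{0,1,2,\dots\}$ started at $Y_0 = 0$ with transitions: from a state $k \ge 2$ move to $k+1$ with probability $\tfrac{2}{3}$ and to $k-2$ with probability $\tfrac{1}{3}$, while from the reflecting states $0$ and $1$ move up by one deterministically. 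The relation $\mathbb{E}[P_{Y_{n+1}}(z)\mid Y_n] = z\,P_{Y_n}(z)$ iterates to
\[
z^n \;=\; \mathbb{E}\bigl[P_{Y_n}(z)\bigr] \;=\; \sum_{k \ge 0} \mathbb{P}(Y_n = k)\,P_k(z),
\]
which matches the statement with $\beta_k = \mathbb{P}(|Y_n|=k)$, since $Y_n \ge 0$ almost surely.

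Next, the bound $|P_k(z)| \le 1$ on $\Gamma$ furnished by Theorem~\ref{newpolyboundedgrow} yields
\[
\left|\, z^n - \sum_{k=0}^{\lfloor t\sqrt n\rfloor}\beta_k P_k(z)\right| \;\le\; \sum_{k > \lfloor t\sqrt n\rfloor} \mathbb{P}(Y_n=k) \;=\; \mathbb{P}(Y_n > t\sqrt n),
\]
so the theorem reduces to the concentration inequality $\mathbb{P}(Y_n > t\sqrt n) \le 2e^{-t^2/7}$.

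Finally, I would prove this tail bound by an exponential-moment argument tailored to the reflecting structure. The away-from-boundary increment $X \in \{+1,-2\}$ with probabilities $(\tfrac{2}{3},\tfrac{1}{3})$ is centered with variance $2$ and moment generating function $\phi(\lambda) = \tfrac{2}{3}e^{\lambda} + \tfrac{1}{3}e^{-2\lambda}$; a direct computation shows $\phi(\lambda) \le e^{\lambda^2}$ for $\lambda \ge 0$, so the unrestricted walk $S_n = \sum_{i=1}^n X_i$ satisfies the Chernoff tail $\mathbb{P}(S_n > t\sqrt n) \le e^{-t^2/4}$. The chain $Y_n$ differs from $S_n$ only through its behaviour at the reflecting states $\{0,1\}$, and I would quantify this by the Doob decomposition $Y_n = \widetilde Y_n + L_n$, where $L_n = \sum_{i<n}\mathbf{1}\{Y_i \in \{0,1\}\}$ is the local time at the reflecting boundary and $\widetilde Y_n$ is a martingale with increments in $[-2,1]$. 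Azuma--Hoeffding controls $\widetilde Y_n$ on the $\sqrt n$ scale, and a separate $O(\sqrt n)$ tail estimate for $L_n$ (the chain is recurrent, with $O(\sqrt n)$ expected visits to $\{0,1\}$) closes the argument. This concentration step is the main obstacle: the reflection at $\{0,1\}$ destroys the clean exponential-moment structure of the unrestricted walk, and balancing the martingale and local-time contributions with a single exponential constant is what forces the slightly loose $1/7$ in place of the $1/4$ one would obtain for the unreflected chain.
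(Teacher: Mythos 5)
Your reduction of the theorem to a tail bound for a Markov chain, via the rearranged recurrence $zP_k = \tfrac{2}{3}P_{k+1}+\tfrac{1}{3}P_{k-2}$ together with the boundary identities $zP_0=P_1$, $zP_1=P_2$ and the bound $|P_k|\le 1$ on $\Gamma$, is exactly the right structure and matches the first half of the paper's argument. However, you build a \emph{one-sided} chain on $\{0,1,2,\dots\}$ with deterministic reflection at $\{0,1\}$, and this is where the proposal has a genuine gap: your chain is only a submartingale (the expected drift from state $0$ or $1$ is $+1$), so you are forced into a Doob decomposition $Y_n = \widetilde Y_n + L_n$ and a separate tail estimate on the local time $L_n$. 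You correctly flag this as ``the main obstacle,'' but it is not merely an obstacle to be balanced --- you have not shown that $L_n$ has a subgaussian tail at scale $\sqrt n$, and this is a nontrivial excursion-theoretic estimate that the paper never needs. Moreover, even a union bound splitting $\{Y_n>t\sqrt n\}$ between the martingale and local-time parts dilutes the exponent and does not obviously recover the constant $1/7$; your attribution of the $1/7$ to this balancing is speculative.

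The paper sidesteps all of this with a simple but crucial modification: it defines the chain on all of $\mathbb{Z}$, extends $P_{-n}:=P_n$, and chooses \emph{symmetric, randomized} transitions from the small states so that the chain is a true martingale --- e.g., from $0$ it moves to $\pm 1$ with probability $\tfrac{1}{2}$ each (so $\tfrac12 P_1 + \tfrac12 P_{-1} = zP_0$ with zero drift), and from $1$ to $2$ or $-2$ with probabilities $\tfrac34,\tfrac14$ (so $\tfrac34 P_2 + \tfrac14 P_{-2} = zP_1$ and drift $\tfrac34\cdot 2 + \tfrac14\cdot(-2) = 1 = Y_{k-1}$). This preserves both the identity $\mathbb{E}[P_{Y_n}(z)]=z^n$ and the martingale property, after which Freedman's inequality applies directly with a variance bound $Z_n\le 5n/2$ and increment bound $K=3$, yielding the constant $1/7$ cleanly. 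This is the true analogue of the Sachdeva--Vishnoi $\pm1$ walk on $\mathbb{Z}$; your reflected chain loses the symmetry that makes their argument work. (Your final coefficients $\beta_k$ do agree with the paper's by uniqueness of the expansion of $z^n$ in the basis $\{P_k\}$, so that identification is fine, but the concentration step as you sketch it is incomplete.)
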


The proof of Theorem \ref{polynomialapproxcomplex} is given in Section \ref{proofthm2}. 
In Figure \ref{fig:Pn-plots} we plot the magnitude of $P_n(z)$ on a square centered at the origin, highlighting the locations of the zeros of $P_n$.  We also note that as $n$ grows, the region where $|P_n| \leq 1$ increasingly resembles the deltoid region illustrated in Figure \ref{fig:Pn-stability}.
\begin{figure}[h!]
    \centering
    \includegraphics[width=.8\linewidth]{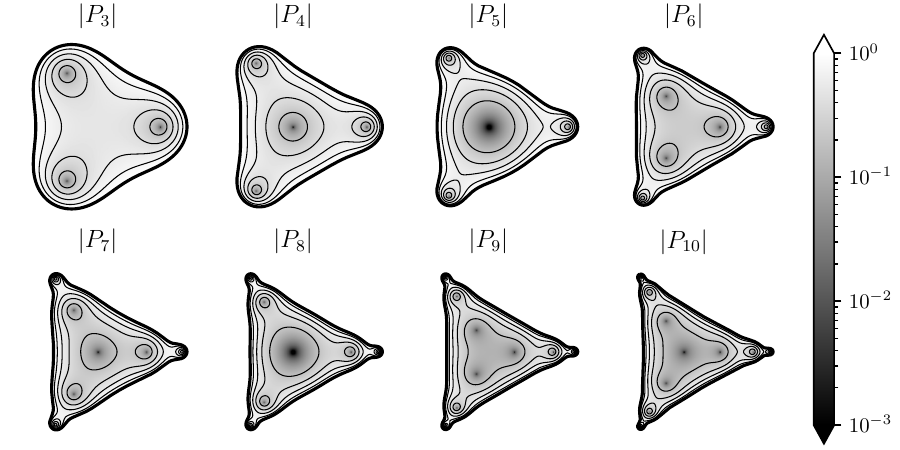}
    \caption{Plots of $|P_n(z)|$ on 
    $\{z \in \mathbb{C}: |\Real(z)| \leq 1, |\Imag(z)| \leq 1 \}$, where points such that $|P_n(z)| \leq 10^{-3}$ are shown in black, and points such that $|P_n(z)| \geq 1$ are shown in white.   The contours indicate $|P_n(z)| \in \{0.2, 0.4, 0.6, 0.8, 1\}$.}
    \label{fig:Pn-plots}
\end{figure}

\section{Application to the power iteration with momentum}\label{sec:power}
In this section, we describe a direct application of Theorem
\ref{newpolyboundedgrow} to accelerating the power iteration. 
In preparation for using the polynomials $P_n(z)$ for a modified power method, we scale and dilate these polynomials such that they satisfy a recurrence formula involving multiplying the $n$-th polynomial by $z$ instead of $\frac{3}{2}z$ as in \eqref{newpoly}. This modification will make it easier to compare the resulting algorithm to the power method. In particular, we define
$$
\tilde{P}_n(z) = \left(\frac{2\lambda}{3} \right)^n P_n(z/\lambda), \quad \text{for} \quad n \ge 0,
$$
such that $\tilde{P}_{0}(z) = 1$, $\tilde{P}_1(z) = \frac{2}{3} z$, and $\tilde{P}_2(z) = \frac{4}{9} z^2$, and
\begin{equation} \label{scaledilatepn}
\tilde{P}_{n+1}(z) = z \tilde{P}_n(z) - \tfrac{4 \lambda^3}{27}\tilde{P}_{n-2}(z),
\quad \text{for}  \quad n \ge 2.
\end{equation}
In particular, take note of the constant $4\lambda^3/27$, which will make an appearance below.

\subsection{Notation} \label{algnotation}
The following notation and assumptions are used throughout this section. Let $\vect{A} \in \mathbb{C}^{n \times n}$ be a diagonalizable matrix 
(this assumption could be generalized) 
with eigenvalues $\lambda_1,\ldots,\lambda_n$. Assume $\vec{A}$ has a unique eigenvalue of largest magnitude and its other eigenvalues are sorted in descending order by magnitude
$$
|\lambda_1| > |\lambda_2| \ge \cdots \ge |\lambda_n|.
$$
Let $\vec{\phi}_1,\ldots,\vec{\phi}_n$ be the corresonding normalized eigenvectors.
Given $\vect{x},\vect{y} \in \mathbb{C}^n$, let $\langle \vec{x}, \vec{y} \rangle = \sum_{i=1}^n \vec{x}_i \overline{\vec{y}}_i$ and $\|\vect{x}\|_2 = \sqrt{\langle \vect{x}, \vec{x} \rangle}$.

\subsection{Algorithms} \label{secalgorithms}
First, in Algorithm \ref{alg1}, we precisely state the power method, which we use for comparison and to initialize the proposed methods.  
\begin{algorithm}[h!] 
\caption{Power method}
\label{alg1}
\begin{algorithmic}[1]
\REQUIRE Matrix $\vect{A} \in \mathbb{R}^{n \times n}$, vector $\vect{u}_0 \in \mathbb{R}^n$,  $N \in \mathbb{N}$
\STATE Set $h_0 = \|\vect{u}_0\|_2$ and $\vect{x}_0 = h_{0}^{-1}\vect{u}_{0}$
\FOR{$k = 0,\dots,N-1$}
    \STATE Set $\vect{u}_{k+1} = \vect{A} \vect{x}_k$
    \STATE Set $h_{k+1} = \|\vect{u}_{k+1}\|_2$ and $\vect{x}_{k+1} = h_{k+1}^{-1}\vect{u}_{k+1}$
    \STATE Set $\nu_{k} = \langle \vect{u}_{k+1}, \vec{x}_k \rangle$ and $d_{k} = \| \vect{u}_{k+1} - \nu_{k}\vect{x}_k\|_2$ \hfill $\triangleright$ \emph{defined for later use}
\ENDFOR
\RETURN $\vec{x}_N$
\end{algorithmic}
\end{algorithm}
Next, in Algorithm \ref{algnew}, we present the Deltoid Momentum Power Method, a modification of the power method inspired by Theorem \ref{newpolyboundedgrow}.
In \cite{xu2018accelerated}, a momentum accelerated power iteration for problems with real eigenvalues is developed based on the recursion for Chebyshev polynomials. As shown in \cite{austin2024dynamically,xu2018accelerated}, this iteration given by $\vect{u}_{k+1} = \vec{A}\vect{x}_k - \beta/h_k \vect{x}_{k-1}$ with normalization $\vect{x}_k = \vect{u}_k/h_k$, $h_k =\|\vect{u}_k\|_2$, can be written in terms of a power iteration applied to an augmented matrix. 
In \cite{xu2018accelerated} it is shown that the optimal parameter is $\beta = \lambda_2^2/4$. The main contribution of \cite{austin2024dynamically} was showing the optimal (static) method could be efficiently approximated by a dynamically set parameter based on a posteriori information, 
without a priori knowledge of $\lambda_2$.

Similarly, Algorithm \ref{algnew}, which is based on a Faber polynomial recursion, can be expressed in terms of a power iteration applied to an augmented matrix.
Observe that writing $\vect{x}_n = \tilde P_n(\vec{A})\vect{x}_0$, the rescaled polynomial iteration \eqref{scaledilatepn} with $z$ replaced by $\vec{A}$,  defines, up to normalization, a power-like iteration $\vect{x}_{n+1} = \vec{A} \vect{x}_n - \beta \vect{x}_{n-2}$, 
where, as in \eqref{scaledilatepn}, the optimal choice for parameter $\beta$ is
$\beta = 4 \lambda_2^3/27$.
An appropriate normalization follows naturally from formulating the accelerated iteration as a power iteration applied to an augmented matrix $\vec{A}_\beta$ given by
\[ \vect{A}_\beta = \begin{pmatrix}
\vect{A} & \vect{0} & -\beta \vect{I}\\
\vect{I} & \vect{0} & \vect{0} \\
\vect{0} & \vect{I} & \vect{0} \end{pmatrix}.
\]
The accelerated iteration 
$\vect{u}_{k+1} = \vect{A} \vect{x}_{k} - (\beta/(h_k h_{k-1})) \vect{x}_{k-2}$ 
in line 3 of Algorithm \ref{algnew} 
results from applying the power iteration to the augmented matrix $\vect{A}_\beta$ by
$\tilde{\vect{u}}_{k+1} = \vect{A}_\beta \tilde{\vect{x}}_{k}$,  for $\tilde{\vect{x}}_k = \tilde{\vect{u}}_k/h_k$ 
and $h_k = \|\vect{u}_k\|_2$ where $\vect{u}_k$
 is the $n$-dimensional vector consisting of the first $n$ entries of $\tilde{\vec{u}}_k$. 
 The details can be found by following the arguments in \cite{austin2024dynamically}.
 Algorithm \ref{algnew} as it is presented is efficient but not practical: to be efficient, the algorithm requires a good choice of the parameter $\beta$, which, as will be seen in Theorem \ref{algoworks}, requires knowledge of $\lambda_2$, which is in general a priori unknown. 
 A dynamic method to approximate Algorithm \ref{algnew}, analogous to the approach of \cite{austin2024dynamically}, is presented in Subsection \ref{subsec:dynamic}.

\begin{algorithm}[h!] 
\caption{Deltoid Momentum Power Method}
\label{algnew}
\begin{algorithmic}[1]
\REQUIRE 
Matrix $\vect{A} \in \mathbb{R}^{n \times n}$,
vector $\vect{v}_0 \in \mathbb{R}^n$,  parameter $\beta  \in \mathbb{C}$, $N \in \mathbb{N}$
\STATE Do two iterations of Algorithm \ref{alg1} with inputs $\frac{2}{3} \vect{A}$ and $\vec{v}_0$
\FOR{$k = 2, \dots, N - 1$} 
\STATE Set $\vect{v}_{k+1} = \vect{A} \vect{x}_k$
\STATE{Set $\vect{u}_{k+1} = \vect{v}_{k+1} - (\beta /(h_{k} h_{k-1})) \vect{x}_{k-2}$}
\STATE{Set $h_{k+1} = \|\vect{u}_{k+1}\|_2$ and $\vect{x}_{k+1} = h_{k+1}^{-1} \vect{u}_{k+1}$}
\ENDFOR
\RETURN  $\vec{x}_N$ 
\end{algorithmic}
\end{algorithm}

\subsection{Theoretical guarantees} \label{secguarantee}

\begin{theorem} \label{algoworks} 
In addition to the assumptions of Section \ref{algnotation}, assume that $\lambda_* \in \mathbb{C}$ is given such that
$|\lambda_*| < |\lambda_1|$ and
\begin{equation} \label{eqdeltoidcondition}
\lambda_2,\ldots,\lambda_n \in \lambda_* \Gamma  = \{ \lambda_* z \in \mathbb{C} : z \in \Gamma\}.
\end{equation}
Let $\vec{v}_0 \in \mathbb{R}^n$ be an initial vector with eigenbasis expansion $\vec{v}_0 = \sum_{j=1}^n a_j \vect{\phi}_j$. 
Run Algorithm \ref{algnew} with inputs $\vect{A}$, $\vect{v}_0$, $\beta = 4\lambda_*^3/27$, and $N$. Then, the output $\vect{x}_N$ satisfies
$$
\min_{\theta \in [0,2\pi)} \| \vec{x}_N  - e^{i \theta}\vec{\phi}_1\|_2 = \mathcal{O} \left( \left(1+\sqrt{\left|\frac{\lambda_1}{\lambda_*}\right| - 1}\right)^{-N} \right),
$$
as $N \to \infty$, where the implied constant in the big-$\mathcal{O}$ only depends on $a_1, \dots, a_n$. 
\end{theorem}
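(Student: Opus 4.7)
The plan is to reduce Algorithm \ref{algnew} to the unnormalized polynomial iteration $\vec{y}_N := \tilde{P}_N(\vec{A})\vec{v}_0$ defined by \eqref{scaledilatepn} with $\lambda = \lambda_*$, and then apply Theorem \ref{newpolyboundedgrow} on the spectrum of $\vec{A}$ to control the relative growth of $\tilde{P}_N(\lambda_j)$ across the eigenvalues. By induction on $k$, I would verify that each iterate of Algorithm \ref{algnew} satisfies $\vec{x}_k = \vec{y}_k/\|\vec{y}_k\|_2$ up to a unimodular scalar: the two power-iteration steps in line 1 produce $\vec{x}_0, \vec{x}_1, \vec{x}_2$ in the directions of $\vec{v}_0, \vec{A}\vec{v}_0, \vec{A}^2 \vec{v}_0$, matching the directions of $\tilde{P}_0(\vec{A})\vec{v}_0, \tilde{P}_1(\vec{A})\vec{v}_0, \tilde{P}_2(\vec{A})\vec{v}_0$, and the non-standard factor $\beta/(h_k h_{k-1})$ in line 4 is chosen precisely so that, upon tracking the scaling identity $\|\vec{y}_{k+1}\|_2 = h_{k+1}\|\vec{y}_k\|_2$ induced by the normalization in line 5, the update in lines 4--5 telescopes to $\vec{y}_{k+1} = \vec{A}\vec{y}_k - \beta \vec{y}_{k-2}$. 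This is the augmented-matrix viewpoint sketched in Subsection \ref{secalgorithms}.

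Next I would expand in the eigenbasis, so that $\vec{y}_N = (2\lambda_*/3)^N \sum_{j=1}^n a_j\, P_N(\lambda_j/\lambda_*)\, \vec{\phi}_j$. The hypothesis $\lambda_j \in \lambda_* \Gamma$ for $j \ge 2$ combined with the boundedness part of Theorem \ref{newpolyboundedgrow} gives $|P_N(\lambda_j/\lambda_*)| \le 1$, while setting $\varepsilon := |\lambda_1/\lambda_*| - 1 > 0$ and applying the growth part gives $|P_N(\lambda_1/\lambda_*)| \ge \tfrac{1}{3}(1+\sqrt{\varepsilon})^N$. Writing $c := a_1 \tilde{P}_N(\lambda_1)$ and $\vec{r} := \sum_{j \ge 2} a_j \tilde{P}_N(\lambda_j) \vec{\phi}_j$, these bounds combine to yield $\|\vec{r}\|_2/|c| \le 3\,(\sum_{j \ge 2} |a_j|/|a_1|)\,(1+\sqrt{\varepsilon})^{-N}$. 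Choosing $\theta = \arg(c)$ and writing $\vec{x}_N = (c \vec{\phi}_1 + \vec{r})/\|c \vec{\phi}_1 + \vec{r}\|_2$, a short triangle-inequality calculation gives $\|\vec{x}_N - e^{i\theta} \vec{\phi}_1\|_2 = \mathcal{O}(\|\vec{r}\|_2/|c|)$, which yields the claimed $(1+\sqrt{\varepsilon})^{-N}$ rate with implied constant controlled by $\sum_{j \ge 2} |a_j|/|a_1|$; since $\vec{A}$ (and hence the eigenbasis $\{\vec{\phi}_j\}$) is fixed, this sum is bounded by a constant multiple of $\|\vec{v}_0\|_2/|a_1|$.

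The hardest part will be executing the first paragraph rigorously. The non-standard normalization $\beta/(h_k h_{k-1})$ in Algorithm \ref{algnew} precludes a direct invocation of the textbook proof that power iteration returns a multiple of $\vec{A}^N \vec{v}_0$; instead one must track the accumulated scaling factors $\|\vec{y}_k\|_2$ and verify that dividing by $h_k h_{k-1}$ restores exactly the unnormalized three-term recurrence, as in the augmented-matrix bookkeeping of \cite{austin2024dynamically}. A secondary technical point is bounding $\sum_{j \ge 2} |a_j|$ in terms of $\|\vec{v}_0\|_2$ in the non-normal case, but since $\vec{A}$ is fixed and diagonalizable, this contributes only a constant depending on the eigenvector matrix of $\vec{A}$, absorbed into the implied big-$\mathcal{O}$ constant.
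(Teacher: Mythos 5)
Your proposal takes essentially the same route as the paper's proof: reduce the normalized iteration of Algorithm \ref{algnew} to the unnormalized polynomial iteration $\tilde{P}_k(\vec{A})\vec{v}_0$, expand in the eigenbasis, invoke both halves of Theorem \ref{newpolyboundedgrow} to compare $|P_N(\lambda_1/\lambda_*)|$ against $|P_N(\lambda_j/\lambda_*)|$ for $j \ge 2$, and close with the triangle inequality. The paper carries out the first step not by an induction phrased in terms of directions but by an explicit change of variables $\vec{w}_k = (3/(2\lambda_*))^k\bigl(\prod_{j=0}^{k-1} h_j\bigr)\vec{v}_k$, checking that $\vec{w}_k$ satisfies the $P_n$ recurrence \eqref{newpoly} with the right initial conditions; this is the same bookkeeping you describe, just written out once and for all rather than iterated. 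The one place worth tightening is your final constant: rather than bounding $\sum_{j\ge2}|a_j|$ via the conditioning of the eigenvector matrix (which smuggles a dependence on $\vec{A}$ into the implied constant, while the theorem asserts dependence only on $\|\vec{v}_0\|_2/|a_1|$), estimate $\|\sum_{j\ge2}a_j\vec{\phi}_j\|_2 = \|\vec{v}_0 - a_1\vec{\phi}_1\|_2 \le \|\vec{v}_0\|_2 + |a_1|$ directly by the triangle inequality and pull out a single maximum of the polynomial ratios, as the paper does; this yields the constant $3(\|\vec{v}_0\|_2/|a_1| + 1)$ cleanly.
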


The proof of Theorem \ref{algoworks} is given in Section \ref{proofalgoworks}. 

\begin{remark}[Optimizing the error bound]  \label{rmkconv}
Under the assumption that \eqref{eqdeltoidcondition} holds with $\lambda_*=\lambda_2$, the error bound is optimized when $\lambda_* = \lambda_2$. Indeed, if $|\lambda_*| < |\lambda_2|$, then the condition $\lambda_2,\ldots,\lambda_n \in \lambda_* \Gamma$ cannot hold since $\Gamma$ is contained in the unit disk. Assume $\lambda_* = \lambda_2$ and 
set $\varepsilon = |\lambda_1/\lambda_2| - 1$. Then, for small $\varepsilon$,
in terms of the convergence rate, Theorem \ref{algoworks} says that the error for the estimate of $\vec{\phi}_1$ decays like 
$$
(1+\sqrt{\varepsilon})^{-N} \approx e^{-N \sqrt{\varepsilon}},
\quad \text{compared to} \quad
\left|\frac{\lambda_2}{\lambda_1} \right|^N = (1+\varepsilon)^{-N} \approx e^{-N \varepsilon},
$$
for the power method. Note that $\varepsilon$ can be interpreted as the size of the spectral gap $|\lambda_1| - |\lambda_2|$ relative to $|\lambda_2|$. Thus, in the critical case when the gap is small, 
we achieve a square root gain over the power method with respect to the relative spectral gap.
\end{remark}

\subsection{Dynamic parameter assignment}\label{subsec:dynamic}
In this section, we assume that \eqref{eqdeltoidcondition} holds with $\lambda_*=\lambda_2$ such that $\lambda_* = \lambda_2$ optimizes the error bound, see Remark \ref{rmkconv}.
We will consider an efficient and practical approximation of Algorithm \ref{algnew} in which the optimal parameter $\beta = 4\lambda_2^3/27$ is approximated by a sequence $\beta_k$ that does not rely on a priori knowledge of $\lambda_2$. The main idea follows that in \cite{austin2024dynamically}.  For the remainder of this section, in addition to the assumptions of Section \ref{algnotation}, we assume
$\lambda_1, \lambda_2 \in \mathbb{R}$, $\lambda_1 > \lambda_2 > 0$.

In light of Remark \ref{rmkconv}, the convergence rate, given by
the ratio of consecutive normed errors for Algorithm \ref{algnew} is asymptotically governed by
$e^{-(N+1)\sqrt{\varepsilon}}/e^{-N\sqrt{\varepsilon}} = e^{-\sqrt{\varepsilon}}$. Writing $\varepsilon$ in terms of $r = |\lambda_2/\lambda_1|$ we have $\varepsilon = 1/r -1$, yielding a convergence rate
$\rho(r) = e^{-\sqrt{1/r -1}}$,
a bijective map $(0,1) \rightarrow (0,1)$. Solving for $r$ in terms of $\rho$ in that interval yields
\begin{align}\label{eqn:rofrho}
r(\rho) = \frac{1}{(\log \rho)^2 + 1}.
\end{align}
In the dynamic Algorithm \ref{alg:dymo2}, we monitor the convergence rate $\rho_k$ given by the ratio of normed residual vectors (denoted $d_k$), and use \eqref{eqn:rofrho} to approximate $r$ by $r_{k+1}$, which is used together with the Rayleigh quotient (denoted $\nu_k$) to produce $\beta_{k}$.
The additional assumption that the first two eigenvalues are real and positive comes from the use of the ratio of normed residuals to approximate $\rho$, which produce a real and positive approximation to $r$ via $\eqref{eqn:rofrho}$, by which the approximation to $\lambda_2$ necessarily agrees with the sign of the Rayleigh quotient. 

\begin{algorithm}
\caption{Dynamic Deltoid Momentum Power Method}
\label{alg:dymo2}
\begin{algorithmic}[1]
\REQUIRE  Matrix $\vect{A} \in \mathbb{R}^{n \times n}$, vector $\vect{v}_0  \in \mathbb{R}^n$, $N \in \mathbb{N}$.
\STATE Do two iterations of Algorithm \ref{alg1} with inputs $\frac{2}{3} \vect{A}$ and $\vec{v}_0$
\FOR{$k =2,\ldots,N-1$} 
\STATE{Set $\vect{v}_{k+1} = \vect{A} \vect{x}_{k}$, $\nu_{k} = \langle \vect{v}_{k+1}, \vect{x}_{k}\rangle$ and $d_{k} = \|\vect{v}_{k+1} - \nu_{k} \vect{x}_{k}\|_2$}
\STATE{Set $\rho_{k-1} = \min\{d_{k}/d_{k-1},1\}$  and $r_k = 1/((\log \rho_{k-1})^2 + 1)$}
\STATE{Set $\beta_k = 4(\nu_{k} r_{k})^3/27$}
\STATE{Set $\vect{u}_{k+1} = \vect{v}_{k+1} - (\beta_k/(h_k h_{k-1})) \vect{x}_{k-2}$}
\STATE{Set $h_{k+1} = \|\vect{u}_{k+1}\|_2$ and $\vect{x}_{k+1} = h_{k+1}^{-1} \vect{u}_{k+1}$}
\ENDFOR
\RETURN $\vec{x}_N$
\end{algorithmic}
\end{algorithm}

While we do not formally establish the convergence rate of Algorithm \ref{alg:dymo2}, we do establish the essential mechanism by which Algorithm \ref{alg:dymo2} dynamically approximates Algorithm \ref{algnew}, in Lemma \ref{lem:contraction}
and Remark \ref{rmk:stabilitydynamic}.
The following lemma shows that $r(\rho)$ given by \eqref{eqn:rofrho} is a contraction. 

\begin{lemma} \label{lem:contraction}
Let $r(\rho)$ be defined by \eqref{eqn:rofrho}. Then, 
$$
|r(\rho) - r(\rho')| \le c | \rho - \rho'|, \quad \text{for} \quad \rho,\rho' \in (0.629,1),
$$
for an absolute constant $0 < c < 1$.
Moreover if $\rho_j$ is an increasing sequence that tends to $1$ with $0.629 \le \rho_j < 1$, then $r(\rho)$ is a contraction mapping in $(\rho_j,1)$ for a corresponding sequence of constants of $c_j$ that decrease monotonically to zero.
\end{lemma}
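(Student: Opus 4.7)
\medskip

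\noindent\textbf{Proof proposal for Lemma \ref{lem:contraction}.} The natural strategy is to show that $r$ is differentiable on $(0,1)$ and that $\sup_{\rho\in(0.629,1)}|r'(\rho)|<1$; then the mean value theorem immediately gives the desired Lipschitz bound with a constant strictly below $1$. So the first step is to compute
\[
r'(\rho)=-\frac{2\log\rho}{\rho\bigl((\log\rho)^2+1\bigr)^2}.
\]
Since $\log\rho<0$ on $(0,1)$, we have $r'(\rho)>0$ there; thus it suffices to bound $r'(\rho)$ from above by a constant less than $1$.

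The cleanest way to do the bounding is to substitute $u=-\log\rho$, which sends $\rho\in(0,1)$ bijectively to $u\in(0,\infty)$ and transforms $r'(\rho)$ into
\[
f(u):=\frac{2u\,e^u}{(u^2+1)^2}.
\]
I would then analyze $f$ directly: $f(0)=0$, $f$ is smooth, and a short computation yields
\[
f'(u)=\frac{2e^u\bigl(u^3-3u^2+u+1\bigr)}{(u^2+1)^3}=\frac{2e^u(u-1)(u^2-2u-1)}{(u^2+1)^3},
\]
whose only zeros in $[0,\infty)$ are $u=1$ and $u=1+\sqrt{2}$. In particular $f$ is strictly increasing on $[0,1]$. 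The endpoint $\rho=0.629$ corresponds to $u_0=-\log(0.629)\approx 0.4636<1$, so $\sup_{u\in(0,u_0]}f(u)=f(u_0)$. A direct numerical evaluation shows $f(u_0)<1$ (this is exactly the reason $0.629$ appears as the threshold), which gives the absolute contraction constant $c=f(u_0)<1$ for $\rho\in(0.629,1)$.

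For the second claim, the same analysis applies to any subinterval $(\rho_j,1)\subset(0.629,1)$. Setting $u_j=-\log\rho_j$ and using monotonicity of $f$ on $[0,1]$,
\[
c_j:=\sup_{\rho\in(\rho_j,1)}|r'(\rho)|=f(u_j),
\]
which is a decreasing function of $\rho_j$. As $\rho_j\nearrow 1$ we have $u_j\searrow 0$, and since $f(u)=2u+O(u^2)$ near $0$, $c_j=f(u_j)\to 0$ monotonically. A final application of the mean value theorem on $(\rho_j,1)$ then completes the proof.

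The main (and only nontrivial) obstacle is the numerical verification that $f(u_0)<1$ at the threshold $u_0=-\log(0.629)$; the value is close to $1$, so this must be carried out carefully, either via an explicit rational upper bound on $e^{u_0}$ or by showing that $f(u)<1$ on $[0,u_0]$ through a one-line comparison such as $2u\,e^u\le (u^2+1)^2$ on this range. Everything else is routine calculus once the substitution $u=-\log\rho$ is made.
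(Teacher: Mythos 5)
Your proof is correct and follows essentially the same route as the paper: both apply the mean value theorem, both show $r'$ is monotone decreasing on the relevant interval so that $\sup|r'|$ is attained at $\rho=0.629$, and both close with the same numerical check there. The substitution $u=-\log\rho$ and the explicit factorization of $f'$ are a clean alternative to the paper's analysis of the sign of $r''(\rho)$, but they encode the identical monotonicity computation.
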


The proof of this lemma is based on showing that $r'(\rho)$ is decreasing on the interval $(e^{-1},1)$ and using the fact that $1 > r'(0.629) > r'(1) = 0$, where the value $0.629$ is a numerical upper bound to the solution to the equation $r'(\rho)= 1$ 
for $\rho \in (e^{-1},1)$.

\begin{proof}[Proof of Lemma \ref{lem:contraction}]
We start by computing the first two derivatives of $r$ with respect to $\rho$
$$
r'(\rho) = \frac{-2 \log \rho}{\rho(1 +(\log \rho)^2)^2}, \quad \text{and} \quad r''(\rho) = \frac{
2\big(1+\log \rho\big) \big(-1+(\log\rho) \big(2+\log \rho \big)))}{\rho^2(1+(\log \rho)^2)^3}.
$$
We claim that $r''(\rho) < 0$ for $\rho \in (e^{-1},1)$. Indeed, the denominator of the expression defining $r''$ is always positive, while in the numerator $(1+\log \rho)$ is positive and $(-1+(\log\rho) (2+\log \rho ))$ is negative when $\rho \in (e^{-1},1)$. Thus $r'(\rho)$ is decreasing on $(e^{-1},1)$.
Since $e^{-1} < 0.629$ and
$$
r'(0.629) \approx 0.998689 < 0.999 \quad \text{and} \quad r'(1) = 0,
$$
it follows that  $|r'(\rho)| \le 0.999$
for $\rho \in (0.629,1)$. Applying the mean value theorem shows that $r(\rho)$ is a contraction mapping on $(0.629,1)$ for constant $c = 0.999$. The second claim in the statement of the lemma follows from the fact that  $r'(\rho)$ is decreasing on $(0.629,1)$, and satisfies $r'(1) = 0$.
\end{proof}
We note that $\rho \in (0.629,1)$ corresponds to $r \in (0.823,1)$, or $\varepsilon \in $(0,0.215).
\begin{remark}[Stability of dynamic algorithm] \label{rmk:stabilitydynamic}
The results of Theorem \ref{algoworks} hold if $\beta_k = 4 \lambda_*^3/27$, with $\lambda_* \in [\lambda_2,\lambda_1)$. If $\lambda_* < \lambda_2$, it means the component of $\vec{x}_k$ along $\vec{\phi}_2$ will blow up with (but less than) the component along $\vec{\phi}_1$. Note that this agrees with the technique used in \cite{saad84} to filter out the smaller eigenmodes from the approximation.
With this in mind, if an initial $\beta_k$ is too close to zero, the detected convergence rate will be closer to one than the estimate used to form $\beta_k$. In this way, if the true $r = |\lambda_2/\lambda_1|$ is close enough to one, initial approximations to $\rho_{k}$ that lie outside of the contraction regime of Lemma \ref{lem:contraction} naturally map into that regime in future iterations. If $\vec{A}$ is not symmetric, we do not have a guarantee that the Rayleigh quotient $\nu_k$ will be less than $\lambda_1$, but we still find that the difference between the sequence $\beta_k$ and the optimal $\beta$ is dominated by the error in $\rho_k$ as an approximation to $\rho$. As our predicted convergence rate $\rho$ from Remark \ref{rmkconv} is asymptotically accurate, our sequence of approximations $\rho_k$, $r_k$, and $\beta_k$ can be expected to asymptotically approach their predicted values for $\lambda_2/\lambda_1 \approx 1$.
\end{remark}

\subsection{Numerical implementation} \label{numerics}
This section reports numerical results resulting from running the algorithms introduced in Subsections \ref{secalgorithms} and \ref{subsec:dynamic} on an illustrative toy example and a stationary distribution example. Code that reproduces the numerical experiments is available at: 
\begin{center}
\url{https://github.com/petercowal/higher-order-momentum-power-methods}
\end{center}

\subsubsection{Toy example} \label{toyexample}
We consider the toy problem of finding the dominant eigenvector of the matrix
\begin{equation}\label{eq:toy-problem}
\vect{A} = \begin{bmatrix}
    \frac{101}{100} & 0 & 0 & 0 \\ 0 & 1 & 0 & 0\\ 0 & 0 & 0 & -\frac{1}{3} \\ 0 & 0 & \frac{1}{3} & 0
\end{bmatrix}.
\end{equation}
The eigenvalues of $\vec{A}$ are $\lambda_1 =101/100$, $\lambda_2 = 1$, $\lambda_3=i/3$ and $\lambda_4 = -i/3$.
By construction, $\lambda_3,\lambda_4 \in \lambda_2 \Gamma$, see Figure \ref{fig:Pn-stability}. Therefore, if we set $\beta=4/27$ and run  Algorithm \ref{algnew}, then Theorem \ref{algoworks} implies the error between the output $\vec{y}_N$ and $\vec{\phi}_1$ is order $(10/11)^{N}$, while the error for the power method is order $(100/101)^{N}$.  We report numerical results for this example in Section \ref{numericalresults}.

\subsubsection{Circulant matrix example} \label{circulantexample}
We construct a second toy problem to demonstrate what happens when $\lambda_2, \dots, \lambda_n$ lie exactly on the deltoid $\gamma$.  Let $\vec{A} \in \mathbb{R}^{n \times n}$ be defined in terms of the circulant matrix $\vec{C} \in \mathbb{R}^{n-1 \times n-1}$ as follows:
$$\vec{A} = \begin{bmatrix} 1 + \varepsilon & 0 \\ 0 & \vec{C} \end{bmatrix} \quad \text{where} \quad \vec{C} = \begin{bmatrix}
0 & 2/3 &   &  1/3 & 0 \\
0 & \ddots & \ddots &    & 1/3 \\
1/3 & \ddots & \ddots& \ddots&  \\
 & \ddots & \ddots & \ddots & 2/3 \\
 2/3  & & 1/3 & 0 & 0\end{bmatrix}.$$
The eigenvalues of the matrix $\vec{C}$ lie exactly on $\gamma$; for details, see \cite{bottcher2005spectral}.  For our numerical experiments in Section \ref{numericalresults}, we set $n = 100$ and $\varepsilon = 1/100$.

\subsubsection{Stationary distribution} \label{stationarydist}
Here we consider the problem of finding the stationary distribution of a Markov chain.
Recall that the circular law says that if $\vect{M}$ is an $n \times n$ matrix with i.i.d. mean $0$ variance $1$ entries, then the spectral measure of $n^{-1/2} \vect{M}$ converges to the uniform distribution on the complex unit disk, see for example \cite[Theorem 2.8.1] {tao2012topics}. A related phenomenon is that, in many cases, random directed graphs have complex eigenvalues in a region about the origin. 

To create an example with a small spectral gap, we construct a random barbell graph as follows. Let $\vec{X}$ and $\vec{X}'$ be independent $n \times n$ matrices whose entries are i.i.d. Bernoulli random variables to $1$ and $0$ with probability $p$ and $1-p$, respectively, which represents a random directed graph with possible self-loops. Let $\vect{B}$ be the $2n \times 2n$ block matrix whose $n \times n$ diagonal blocks are $\vect{X}$ and $\vect{X}'$, and whose $n \times n$ off-diagonal blocks are zero, except for the entries $\vect{B}_{n,n+1} = \vect{B}_{n+1,n} = 1$. Let $\vect{P}$ be the $2n \times 2n$ column stochastic matrix $\vect{P} = \vect{B} \vect{D}^{-1}$, where $\vec{D}$ is the diagonal matrix with entries $\vect{D}_{ii} = \sum_{k=1}^n \vect{B}_{ki}$. We construct $\vect{P}$ with $n = 16000$ and $p = 1/1000$ and report the numerical results in Section \ref{numericalresults} as well.

\subsection{Numerical results} \label{numericalresults}
For all examples, we compute the relative error per iteration of the power method (Algorithm \ref{alg1}), the momentum method of \cite{xu2018accelerated}, the deltoid momentum power method (Algorithm \ref{algnew}), and the dynamically accelerated deltoid momentum power method (Algorithm \ref{alg:dymo2}).  
For the deltoid momentum power method, we set $\beta = 4\lambda_2^3/27$ using an oracle value for $\lambda_2$,  see Figure 
\ref{fig:relerr-momentum2}. 
\begin{figure}[h!]
    \centering
    \begin{tabular}{cc}
    \includegraphics[width=0.46\textwidth]{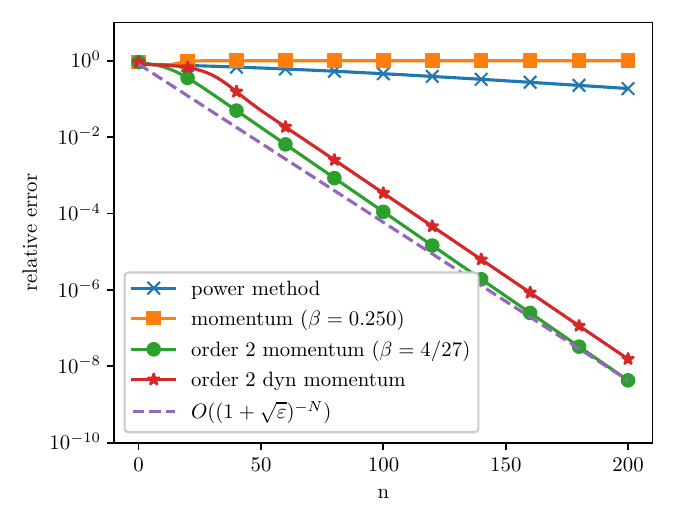} &
    \includegraphics[width=0.46\textwidth]{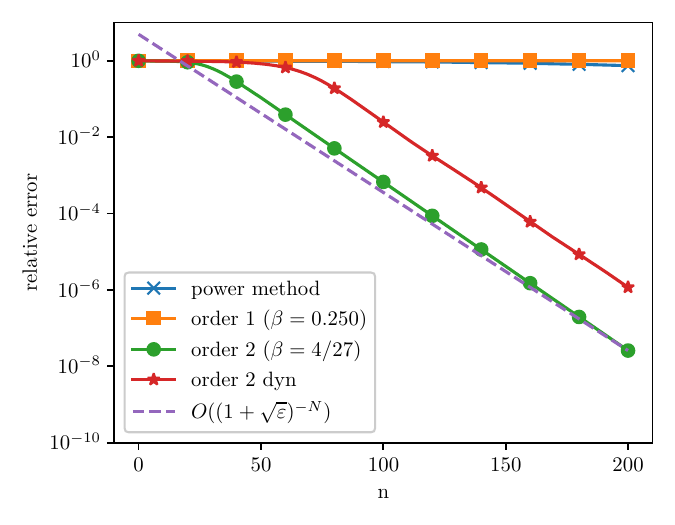} 
    \end{tabular}
    
    \includegraphics[width=0.46\textwidth]{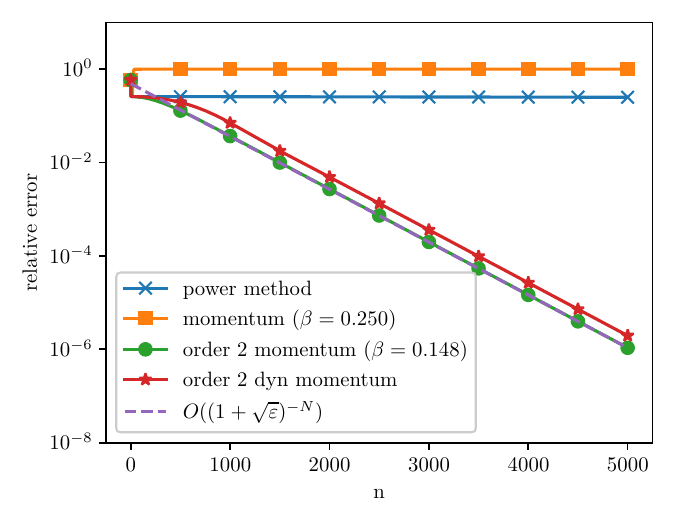} 
    
    \caption{Results for the numerical example of Section \ref{toyexample} (top left), Section \ref{circulantexample} (top right) and Section
    \ref{stationarydist} (bottom), performed with Algorithm \ref{alg1} (power method), the algorithm of \cite{xu2018accelerated} (order 1),  Algorithm \ref{algnew} (order 2), and Algorithm \ref{alg:dymo2} (order 2 dyn).  The dotted line is the asymptotic rate given by Theorem \ref{algoworks}.}
    \label{fig:relerr-momentum2}
\end{figure}

To account for the phase ambiguity of the definition of an eigenvector, the relative error is computed by
$$\text{relative error} = \frac{1}{\| \vect{\phi}_{1}\|_2} \cdot \left\|\frac{\langle \vect{x}, \vect{\phi}_{1}\rangle}{\|\vec{x}\|_2^2}\vect{x} - \vect{\phi}_{1}\right\|_2
=\sin\big(\theta(\vec{x},\vec{\phi}_1)\big),
$$
where $\theta(\vec{x},\vec{\phi}_1) \in [0, \pi)$ is the angle between $\vec{x}$ and $\vec{\phi}_1$. The momentum method presented in \cite{xu2018accelerated} is developed for matrices with real eigenvalues. Here we demonstrate that setting $\beta = \lambda_2^2/4$ (the optimal parameter for symmetric matrices) results in non-convergence.

Figure \ref{fig:relerr-momentum2} illustrates that in both of our examples, the deltoid momentum power method with oracle parameters converges much more quickly than the power method, at a rate closely matched by Theorem \ref{algoworks}.  Moreover, even without a priori knowledge of $\lambda_2$, the dynamic deltoid momentum power method achieves a similar rate of convergence to the deltoid momentum power method.

\section{Proof of main results} 

\subsection{Proof of Theorem \ref{newpolyboundedgrow}} \label{proofthm1}

The proof of Theorem \ref{newpolyboundedgrow} is divided into three parts (Lemma \ref{step1}, Lemma \ref{step2}, and Lemma \ref{step3}), which together establish the theorem.

\begin{lemma} \label{step1}
For all $n \ge 0$, we have
$$
|P_n(z)| \le 1, \quad \text{for all} \quad z \in \Gamma.
$$
\end{lemma}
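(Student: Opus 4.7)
My plan is to use maximum modulus to reduce the bound to the boundary curve $\gamma$ and then prove that the Laurent expansion of $P_n(\psi(w))$ in $w$ has non-negative coefficients summing to $1$, so that the triangle inequality on $|w|=1$ immediately yields $|P_n(\psi(w))| \le 1$. Since $P_n$ is a polynomial and $\Gamma$ is compact, maximum modulus reduces the task to bounding $|P_n(z)|$ for $z \in \gamma$, which I would parameterize via $z = \psi(w)$, $w = e^{it}$. The key claim is
\[
P_n(\psi(w)) = \sum_{k=0}^{n} a_{n,k}\, w^{n-3k} \quad \text{with} \quad a_{n,k} \ge 0 \quad \text{and} \quad \sum_{k=0}^n a_{n,k} = 1.
\]

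The normalization $\sum_k a_{n,k} = 1$ equals $P_n(\psi(1)) = P_n(1)$, and a direct induction using $P_0(1) = P_1(1) = P_2(1) = 1$ together with the recurrence \eqref{newpoly} evaluated at $z=1$ (which reads $P_{n+1} = \tfrac{3}{2}P_n - \tfrac{1}{2}P_{n-2}$) shows $P_n(1) = 1$ for all $n$. For non-negativity of the $a_{n,k}$, I would first solve \eqref{newpoly} in closed form. The characteristic polynomial $2 u^3 - 3 z u^2 + 1 = 0$ of the recurrence has three roots $w, w_2, w_3$ which are precisely the preimages of $z$ under $\psi$. Using the initial conditions and partial fractions on the generating function $\sum_n P_n(z) s^n = (2 - zs - z^2 s^2)/(2 - 3zs + s^3)$ produces
\[
P_n(z) = \tfrac{4}{9}\, S_n + \tfrac{1}{18}\, S_{n-3}, \qquad S_m := w^m + w_2^m + w_3^m.
\]

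Since Vieta's formulas give $w_2 + w_3 = 1/(2w^2)$ and $w_2 w_3 = -1/(2w)$, the partial power sum $T_m := w_2^m + w_3^m$ satisfies the recursion $T_m = T_{m-1}/(2w^2) + T_{m-2}/(2w)$ with $T_0 = 2$ and $T_1 = 1/(2w^2)$. The coefficients of this recursion are themselves positive monomials in $w^{-1}$, so by induction every $T_m$ has non-negative Laurent coefficients in $w^{-1}$; hence $S_m = w^m + T_m$ is a Laurent polynomial in $w$ with non-negative coefficients for every $m \ge 0$. Substituting into the closed form for $P_n$ yields $a_{n,k} \ge 0$ for $n \ge 3$; the cases $n \in \{0,1,2\}$, where the index $n-3$ becomes negative and the formula requires separate handling, are verified directly from $P_0 = 1$, $P_1 = \psi(w) = \tfrac{2}{3}w + \tfrac{1}{3}w^{-2}$, and $P_2 = \psi(w)^2 = \tfrac{4}{9}w^2 + \tfrac{4}{9}w^{-1} + \tfrac{1}{9}w^{-4}$.

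The main obstacle I anticipate is organizational rather than analytical: cleanly packaging the small-$n$ boundary cases ($n \le 2$) together with the main induction on the positive-coefficient recursion for $T_m$, including the bookkeeping needed to express $P_n$ via $S_m$'s when $n-3$ falls outside the ``good'' range $m \ge 1$. Once the recursion structure is set up, the bound $|P_n|\le 1$ falls out mechanically from the triangle inequality on $|w|=1$ and the normalization $\sum_k a_{n,k} = 1$.
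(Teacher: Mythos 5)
Your proof is correct and takes a genuinely different route from the paper's. Both arguments begin by invoking the maximum modulus principle and exploiting the root structure of the characteristic polynomial $p_z(r)=r^3 - \tfrac{3}{2}zr^2 + \tfrac12$ under the substitution $z = \psi(w)$. The paper, however, proceeds by explicitly computing the three coefficients $c_1, c_2, c_3$ on the boundary via the initial conditions, then estimating $|c_1| + |c_2 r_2| + |c_3 r_3|$ through a somewhat delicate algebraic manipulation involving $\alpha = \sqrt{1+8e^{3it}}$; it also needs a separate case analysis for the three cusp points where $p_z$ has a repeated root. Your approach instead establishes the stronger structural claim that $P_n(\psi(w))$ is a Laurent polynomial in $w$ with nonnegative coefficients, normalized to sum to $1$ via $P_n(1)=1$, from which the bound on $|w|=1$ is immediate by the triangle inequality. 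The nonnegativity follows cleanly from the closed form $P_n = \tfrac49 S_n + \tfrac{1}{18}S_{n-3}$ (I checked it against the paper's $c_1 = \tfrac{1}{18}(8 + e^{-3it}) = \tfrac49 + \tfrac{1}{18w^3}$, and against the recurrence and initial conditions; it is correct) together with the positivity-preserving Newton recursion $T_m = \tfrac{1}{2w^2}T_{m-1} + \tfrac{1}{2w}T_{m-2}$, $T_0 = 2$, $T_1 = \tfrac{1}{2w^2}$. Because the Laurent-coefficient identity is a polynomial identity in $w$, it holds uniformly and automatically absorbs the repeated-root cusps that the paper handles separately. What your route buys is a cleaner, case-free argument and a stronger intermediate result (nonnegativity of the Laurent coefficients, which is the deltoid analog of $T_n\bigl(\tfrac{w+w^{-1}}{2}\bigr) = \tfrac{w^n+w^{-n}}{2}$ for Chebyshev); what the paper's route buys is more explicit control of the individual coefficients $c_1, c_2, c_3$, which it reuses directly in the proof of the lower bound (Lemma \ref{step2}). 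If you were to fill in a complete write-up, the only places needing explicit detail are the verification of the generating function and the derivation of the $\tfrac49$, $\tfrac{1}{18}$ constants, both of which you can bypass by simply verifying the closed form satisfies the recurrence and matches $P_0, P_1, P_2$ (using $S_{-1}=0$, $S_{-2}=6z$, $S_{-3}=-6$), which suffices by uniqueness.
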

\begin{proof}[Proof of Lemma \ref{step1}]
When $z \in \mathbb{C}$ is fixed, the polynomial $P_n(z)$ is defined by a homogeneous linear recurrence relation \eqref{newpoly}, which can be solved using the roots of its characteristic polynomial, see for example \cite[Chapter 2]{Elaydi2005}.
In particular, for fixed $z \in \mathbb{C}$, the characteristic polynomial of the recurrence defining $P_n(z)$ is 
$$
p_z(r) = r^3 - \frac{3}{2} z r^2 + \frac{1}{2}.
$$
Recall that the discriminant $\Delta(q)$ of a cubic polynomial
$q(x) = a x^3 + b x^2 + c x + d$ is
$$
\Delta(q) = 18 a b c d - 4 b^3 d  + b^2 c^2  -4 a c^3  - 27 a^2 d^2.
$$
Setting $a = 1$, $b = -(3/2) z$, $c = 0$, and $d = 1/2$ gives
\begin{equation} \label{Deltapz}
\Delta(p_z) = \frac{27}{4}(z^3-1).
\end{equation}
The polynomial $p_z$ has distinct roots if and only if $\Delta(p_z) \not =0$. Thus, it follows from \eqref{Deltapz} that $p_z(r)$ has distinct roots when $z \in \mathbb{C} \setminus \{1,e^{2\pi i/3},e^{4\pi i/3}\}$, and repeated roots when $z \in \{1,e^{2\pi i/3},e^{4\pi i/3}\}$.
Since the general solution to a linear recurrence depends on whether the roots of its characteristic polynomial are distinct or repeated, we consider two cases.

\paragraph{Case 1: Repeated roots}
In this case, we can solve the recurrence directly by inspection:
$$
P_n(e^{k 2\pi i/3}) = e^{n k 2\pi i/3},
$$
for all $n \ge 0$. In particular, $|P_n(e^{k 2\pi i/3})| \le 1$, for $k \in \{0,1,2\}$. 

\paragraph{Case 2: Distinct roots} 
Suppose that $z \in \mathbb{C} \setminus \{1,e^{2\pi i/3},e^{4\pi i/3}\}$ and let $r_1,r_2,r_3$ be the distinct roots of $p_z(r)$. As will become apparent, it suffices to consider the case where $p_z$ has a root of magnitude $1$. Suppose that $r_1 = e^{i t}$ is a root of $p_z(r)$ for $t \in [0,2\pi)$, that is,
$$
p_z(e^{it}) = e^{3it} - \frac{3}{2} z e^{2it} + \frac{1}{2} = 0.
$$
Solving for $z =: \gamma(t)$ gives
\begin{equation} \label{derivegamma}
\gamma(t) = \frac{2}{3}e^{it} + \frac{1}{3}e^{-2it}, \quad \text{for} \quad t \in [0,2\pi],
\end{equation}
which is a parameterization of the deltoid curve $\gamma$ defined in \eqref{gamma}. For fixed $t \in [0,2\pi]$ consider the polynomial
$$
p_{\gamma(t)}(r) =
r^3-\frac{3}{2}  \left(\frac{2 e^{i t}}{3}+\frac{1}{3} e^{-2 i t}\right) r^2+\frac{1}{2}.
$$
By construction $p_{\gamma(t)}$ has root $r_1 = e^{i t}$, so we can factor out $(r-e^{it})$, and use the quadratic formula to deduce that the other roots $r_2$ and $r_3$ of $p_{\gamma(t)}$ are  
$$
r_2 = -\frac{1}{4} e^{-2 i t} \left(-1+\sqrt{1+8 e^{3 i t}}\right) \quad \text{and} \quad
r_3 = -\frac{1}{4} e^{-2 i t} \left(-1-\sqrt{1+8 e^{3 i t}}\right).
$$
Since $|\sqrt{1 + 8e^{3it}}| \leq 3$, we conclude that $|r_2|, |r_3| \leq 1$ by the triangle inequality. As we are considering
the case where the roots $r_1,r_2,r_3$ are distinct, we have 
$$
P_n(\gamma(t)) = c_1 r_1^n + c_2 r_2^n + c_3 r_3^n,
$$
for constants $c_1,c_2,c_3$, see for example \cite[Corollary 2.24]{Elaydi2005}. 
Solving for the constants $c_1,c_2,c_3$ by using the initial conditions $P_0(z)=1$, $P_1(z) = z$, $P_2(z) = z^2$ gives
\begin{eqnarray*}
c_1 &=& \frac{1}{18} \left(8+e^{-3 i t}\right) \\
c_2 &=& \frac{1}{36} e^{-3 i t} \left(2 e^{3 i t} \left(5-\sqrt{1+8 e^{3 i t}}\right)-\sqrt{1+8 e^{3 i t}}-1\right) \\
c_3 &=& \frac{1}{36} e^{-3 i t} \left(2 e^{3 i t} \left(5+\sqrt{1+8 e^{3 i t}}\right)+\sqrt{1+8 e^{3 i t}}-1\right),
\end{eqnarray*}
which can be verified by direct substitution. We 
next
show that $|P_n(\gamma(t))| \le 1$ for all $n \ge 0$. Observe that
$$
|P_n({\gamma(t)})| = \left|c_1 r_1^n + c_2 r_2^n + c_3 r_3^n \right|  \le 1/2 + |c_2 r_2| + |c_3 r_3|,
$$
where final inequality uses the fact $|r_1|,|r_2|,|r_3| \le 1$, and $|c_1| \le 1/2$. It remains to show that
$$
|c_2 r_2| + |c_3 r_3| \le 1/2.
$$
Substituting in the formulas for $c_2,r_2,c_3,$ and $r_3$ and collecting terms gives
$$
|c_2 r_2| + |c_3 r_3| = 
\frac{1}{36} \left(\left| -3 \sqrt{1+8 e^{3 i t}}+4 e^{3 i t}+5\right| +\left| 3 \sqrt{1+8 e^{3 i t}}+4 e^{3 i t}+5\right| \right).
$$
Let $\alpha := \sqrt{1 + 8e^{3it}}$ and note that $4e^{3it} = (\alpha^2 - 1)/2$.  Then,
\begin{eqnarray*}
|c_2 r_2| + |c_3 r_3| &=&
\frac{1}{36} \left(\left| -3 \alpha + \frac{\alpha^2}{2} + \frac{9}{2} \right| + \left| 3 \alpha + \frac{\alpha^2}{2} + \frac{9}{2} \right| \right) \\
&=& \frac{1}{36} \frac{1}{2} \left( \left| (\alpha - 3)^2 \right| + \left| (\alpha + 3)^2 \right| \right)  \\
&=&\frac{1}{72}\big((\alpha - 3)(\overline{\alpha} - 3) + (\alpha + 3)(\overline{\alpha} + 3)\big) = \frac{1}{72} \big(  2\alpha\overline{\alpha} + 18 \big).
\end{eqnarray*}
Since $\alpha \overline{\alpha} = |\alpha|^2 = |1 + 8e^{3it}| \leq 9,$ we conclude that 
$$|c_2 r_2| + |c_3 r_3| \leq \frac{1}{72} \cdot (2 \cdot 9 + 18) = \frac{1}{2}.$$
In combination with Case 1, this gives us
$|P_n(\gamma(t))| \le 1$ for all $t \in [0,2\pi]$. By the maximum modulus principle, it follows that $|P_n(z)| \le 1$ for $z \in \Gamma,$ since $\Gamma$ is the closure of the region enclosed by $\gamma$.
\end{proof}

\begin{lemma} \label{step2}
For all $n \ge 0$ and $\varepsilon > 0$ we have
$$
|P_n(z)| \ge \frac{1}{3} (1+\sqrt{\varepsilon})^n, \quad \text{for} \quad z \in \mathbb{C} : |z| = 1+\varepsilon.
$$
\end{lemma}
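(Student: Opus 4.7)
The plan is to mirror the Chebyshev-style argument of Lemma~\ref{chebgrowrate}, replacing $\arccosh$ with the conformal map $\psi$ from \eqref{psifordeltoid}. The characteristic polynomial of the recurrence \eqref{newpoly} is $p_z(r) = r^3 - \tfrac{3}{2}z r^2 + \tfrac{1}{2}$, and for $z = \psi(w)$ the value $w$ itself satisfies $p_{\psi(w)}(w) = 0$; accordingly I would identify $w = \psi^{-1}(z)$ (with $|w|>1$) as the dominant root of $p_z$, expand $P_n(z)$ in powers of the three roots, and exploit the dominance of $w^n$.

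The first step is to establish the geometric inequality $|w| \ge 1+\sqrt{\varepsilon}$. A direct computation gives $\max_{|w|=\rho}|\psi(w)| = (2\rho^3+1)/(3\rho^2)$, the maximum being attained at $\arg w \in \{0, 2\pi/3, 4\pi/3\}$. The condition $|z| = 1+\varepsilon$ therefore forces $(2\rho^3+1)/(3\rho^2) \ge 1+\varepsilon$; writing $\rho = 1+u$, this reduces to $u^2(3+2u)/(3(1+u)^2) \ge \varepsilon$, and since the left side is bounded above by $u^2$ for $u \ge 0$, we obtain $u \ge \sqrt{\varepsilon}$.

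Next, partial fractions applied to the generating function $G(s) = (2-zs-z^2 s^2)/(2-3zs+s^3)$ (obtained directly from \eqref{newpoly} together with the initial conditions $P_0=1, P_1=z, P_2=z^2$) yields the clean decomposition $P_n(z) = \sum_{j=1}^{3} c_j r_j^n$ with coefficients $c_j = (8r_j^3+1)/(18 r_j^3)$, where $r_1 = w, r_2, r_3$ are the roots of $p_z$. The quadratic factor $p_z(r)/(r-w) = r^2 - r/(2w^2) - 1/(2w)$ gives $|r_{2,3}| \le (1+\sqrt{1+8|w|^3})/(4|w|^2)$; a monotonicity check shows this bound is decreasing in $|w|$ and equals $1$ at $|w|=1$, so $|r_2|, |r_3| \le 1$ for all $|w| \ge 1$. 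Consequently $|c_1| \ge 7/18$ uniformly, while $|c_j r_j^n| = |8 r_j^n + r_j^{n-3}|/18 \le 1/2$ for $n \ge 3$ and $|r_j| \le 1$.

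Combining these with the triangle inequality yields $|P_n(z)| \ge \tfrac{7}{18}|w|^n - 1 \ge \tfrac{7}{18}(1+\sqrt{\varepsilon})^n - 1$, which exceeds $\tfrac{1}{3}(1+\sqrt{\varepsilon})^n$ precisely when $(1+\sqrt{\varepsilon})^n \ge 18$. The cases $n \le 2$ are immediate from $P_n(z) \in \{1, z, z^2\}$ and elementary polynomial inequalities in $\sqrt{\varepsilon}$. I expect the main obstacle to be the intermediate regime with $n \ge 3$ and $(1+\sqrt{\varepsilon})^n < 18$, where the additive $-1$ from the triangle inequality cannot be absorbed into $|w|^n/3$; I would handle this regime by sharpening the estimate on $|c_2 r_2^n + c_3 r_3^n|$ using that $|r_2|, |r_3|$ are strictly less than $1$ when $|w|>1$, which supplies the geometric decay of the cross terms needed to close the gap.
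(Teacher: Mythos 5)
Your proposal correctly derives the generating function, the partial-fraction coefficients $c_j = (8r_j^3+1)/(18r_j^3)$, and the geometric inequality $|w| \ge 1+\sqrt{\varepsilon}$ (your factorization $(2\rho^3 - 3\rho^2 +1) = (\rho-1)^2(2\rho+1)$ is a genuinely cleaner route to the root bound than the paper's trigonometric estimates in Appendix B). However, there is a real gap at the final step, and the fix you propose will not close it.

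The obstruction is that the triangle inequality is too lossy precisely in the regime you flag. The decay rate of $|r_2|,|r_3|$ is essentially tied to the growth rate of $|w|$: your own bound $f(\rho) = \tfrac{1+\sqrt{1+8\rho^3}}{4\rho^2}$ satisfies $f(1)=1$ and $f'(1)=-1$, so $|r_{2,3}| \lesssim 1-\sqrt{\varepsilon}$ while $|w| \approx 1+\sqrt{\varepsilon}$. Writing $a = 1+\sqrt{\varepsilon}$, $b = 1-\sqrt{\varepsilon}$, the sharpest bound your method can produce is $|c_2 r_2^n| + |c_3 r_3^n| \le b^{n-3}$, and you would need $\tfrac{1}{18} a^n \ge b^{n-3}$, i.e. $(a/b)^{n-3}a^3 \ge 18$. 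For small $\varepsilon$ and small $n \ge 3$ (e.g.\ $n=3$, $\varepsilon \to 0$) the left side tends to $1$, so the inequality fails. Concretely, at $n=3$ your bound reads $|P_3(z)| \ge \tfrac{7}{18}(1+\sqrt{\varepsilon})^3 - 1 < 0$, which is vacuous, even though $|P_3(z)| = |\tfrac32 z^3 - \tfrac12| \ge 1$ is easy to verify directly. The loss is not in the individual estimates but in the triangle inequality itself: for small $n$ all three terms $c_j r_j^n$ are of comparable size, and separating them by absolute values throws away too much.

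The paper avoids this loss by never invoking the triangle inequality on the subdominant terms. It first reduces the complex case $|z|=1+\varepsilon$ to the real point $z = 1+\varepsilon$ via the zero-interlacing argument (Lemma~\ref{step3}, using the factorization \eqref{partialfactor} with $a_{3n+k,j} \in (0,1)$, following \cite{he1994zeros}). On the real axis all three roots $r_j$ are real with $r_3 > -r_2 > 0$ and the coefficients satisfy $c_3 > -c_2 > 0$ (Lemma~\ref{estr2r3c2c3}), so $c_2 r_2^n + c_3 r_3^n > 0$: the subdominant terms \emph{add} to $c_1 r_1^n$ rather than subtract. Then $P_n(1+\varepsilon) \ge c_1 r_1^n \ge \tfrac13(1+\sqrt{\varepsilon})^n$ for all $n \ge 0$ with no intermediate regime to patch. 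If you want to keep your partial-fraction setup, the missing step is to prove a reduction of the form $|P_n(z)| \ge P_n(|z|)$ and then verify the sign pattern of the real roots and coefficients; your current estimates $|r_{2,3}| \le 1$ and $|c_1| \ge 7/18$ are correct but not strong enough by themselves.
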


\begin{proof}[Proof of Lemma \ref{step2}]
In the proof of Lemma \ref{step1}, we proved that $p_z$ has distinct roots when  
$z \not \in \{1,e^{2\pi i/3},e^{4\pi i/3}\}$; in particular, when $|z|=1+\varepsilon$ for $\varepsilon > 0$, the roots $r_1,r_2,r_3$ of $p_z$ are distinct, and 
$$
P_n(z) = c_1 r_1^n + c_2 r_2^n + c_3 r_3^n,
$$
for some coefficients $c_1,c_2,c_3$ determined by the initial conditions, see for example \cite[Corollary 2.24]{Elaydi2005}. Estimates for these roots and coefficients are derived in 
Appendix
\ref{appendixB}. First, by Lemmas \ref{lemtrig}, 
\ref{extendingbound}, \ref{extendingbound2}, we have
$$
r_1 \ge 1+\sqrt{\varepsilon} \quad \text{for} \quad \varepsilon \ge 0.
$$
Second, by Lemma  \ref{lowerboundc1} we have $c_1 \ge 1/3$. Third, by Lemma \ref{estr2r3c2c3} we have $r_3 > -r_2 > 0$ and $c_3 > -c_2 > 0$ for $\varepsilon > 0$, which implies $c_2 r_2 + c_3 r_3 > 0$. We conclude that
$$
P_n(z) = c_1 r_1^n + c_2 r_2^n + c_3 r_3^n \ge \frac{1}{3}(1+\sqrt{\varepsilon})^n,
$$
as was to be shown.
\end{proof}

\begin{lemma} \label{step3}
Suppose that $z \in \mathbb{C}$ satisfies $|z| = 1+\varepsilon$ for $\varepsilon > 0$. Then,
$$
|P_n(z)| \ge P_n(1+\varepsilon).
$$
\end{lemma}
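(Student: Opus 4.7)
The plan is to exploit a threefold rotational symmetry of the family $\{P_n\}$ to reduce the claim to an inequality about a single polynomial evaluated on a circle. A direct induction on $n$ from the recurrence \eqref{newpoly} shows that $P_n(\omega z) = \omega^n P_n(z)$, where $\omega = e^{2\pi i/3}$. Comparing coefficients, this forces only monomials $z^k$ with $k \equiv n \pmod{3}$ to appear in $P_n$, so there is a real polynomial $Q_n$ of degree $\ell = \lfloor n/3 \rfloor$ with positive leading coefficient such that
$$P_n(z) = z^{m} Q_n(z^{3}), \qquad m = n \bmod 3.$$

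Write $z = r e^{i\theta}$ with $r = 1+\varepsilon$, and set $R = r^{3}$, $\phi = 3\theta$, so that $|P_n(z)| = r^{m}\,|Q_n(Re^{i\phi})|$ and $P_n(r) = r^{m} Q_n(R)$. Since $P_n(r) > 0$ by Lemma \ref{step2}, it suffices to show $|Q_n(Re^{i\phi})| \ge Q_n(R)$ for all $\phi \in \mathbb{R}$. The key geometric observation is that if $Q_n$ factors as $Q_n(w) = c \prod_{j=1}^{\ell}(w - t_j)$ with $c > 0$ and all zeros $t_j$ real and positive, then for each $\phi$ and each $j$,
$$|Re^{i\phi} - t_j|^{2} = R^{2} + t_j^{2} - 2 R t_j \cos\phi \ge (R - t_j)^{2},$$
and taking products over $j$ yields
$$|Q_n(Re^{i\phi})| = c \prod_{j} |Re^{i\phi} - t_j| \ge c \prod_{j} |R - t_j| = |Q_n(R)| = Q_n(R),$$
as desired.

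The main obstacle is therefore to prove that all zeros of $Q_n$ are real and positive, equivalently that the nonzero zeros of $P_n$ lie on the three rays $\arg(z) \in \{0, 2\pi/3, 4\pi/3\}$. This is consistent with the zero structure of Faber polynomials on hypocycloid domains \cite{he1994zeros} and with the numerical evidence in Figure \ref{fig:Pn-plots}. To obtain it directly for the modified family $P_n$, I would derive from \eqref{newpoly} three interleaved recurrences for the subfamilies $\{Q_{3\ell}\}$, $\{Q_{3\ell+1}\}$, $\{Q_{3\ell+2}\}$, namely
$$Q_{3\ell+3}(w) = \tfrac{3}{2} w\, Q_{3\ell+2}(w) - \tfrac{1}{2} Q_{3\ell}(w), \quad Q_{3\ell+4}(w) = \tfrac{3}{2} Q_{3\ell+3}(w) - \tfrac{1}{2} Q_{3\ell+1}(w),$$
$$Q_{3\ell+5}(w) = \tfrac{3}{2} Q_{3\ell+4}(w) - \tfrac{1}{2} Q_{3\ell+2}(w),$$
and argue by induction on $\ell$ via a Hermite--Biehler-style real-rootedness criterion, propagating an appropriate interlacing relation among the three subfamilies as the inductive invariant. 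Identifying the correct interlacing relation and verifying that it survives the degree-increasing step (the multiplication by $w$ in the first recurrence) is the technical heart of the argument.
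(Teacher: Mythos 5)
Your overall strategy matches the paper's: both proofs exploit the threefold symmetry to write $P_{3\ell+m}(z) = C\, z^{m}\prod_{j}(z^3 - a_j)$ (your $Q_n(z^3)$), reduce the claim to showing the $a_j$ are real and positive, and then apply the elementary inequality $|Re^{i\phi} - a_j|^2 = R^2 + a_j^2 - 2Ra_j\cos\phi \ge (R - a_j)^2$ when $R, a_j > 0$. That geometric step is exactly the paper's closing observation that $|r^3 e^{3i\theta} - a_{3n+k,j}|$ is minimized at $e^{3i\theta} = 1$. Your derivation of the threefold symmetry $P_n(\omega z) = \omega^n P_n(z)$ and the three interleaved recurrences for $Q_{3\ell}, Q_{3\ell+1}, Q_{3\ell+2}$ are correct.

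The genuine gap is exactly the one you flag yourself: you do not actually establish that all zeros of $Q_n$ are real and positive. You propose an inductive Hermite--Biehler/interlacing argument on the three subfamilies, note that "identifying the correct interlacing relation and verifying that it survives the degree-increasing step\ldots\ is the technical heart of the argument," and stop there. Without that ingredient the chain $|Q_n(Re^{i\phi})| \ge Q_n(R)$ does not follow, so the proof is incomplete. The paper closes this gap by invoking and adapting the interlacing argument of \cite[Theorem 3.1]{he1994zeros} (after first checking that $P_n$ has positive leading coefficient, $P_n(0) \geq 0$, and the factorization \eqref{partialfactor}), concluding that in fact $a_{3n+k,j} \in (0,1)$. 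If you want to avoid citing \cite{he1994zeros}, you would need to actually carry out the interlacing induction across the three subfamilies, including the degree-raising step; as written, that is asserted, not proved. A minor secondary point: you justify $P_n(1+\varepsilon) > 0$ by Lemma \ref{step2}, which as stated only gives $|P_n| \ge \tfrac13(1+\sqrt{\varepsilon})^n$; positivity follows more directly from the factorization once $a_j \in (0,1)$ and $R = (1+\varepsilon)^3 > 1$.
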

\begin{proof}[Proof of Lemma \ref{step3}]
The polynomials $P_n$ satisfy the same recurrence as Faber polynomials after appropriate scaling and dilating, see 
\eqref{scaledilatepn} when $\lambda = 3/2$,
and cf. \cite[Proposition 2.1]{he1994zeros}.
Due to satisfying the same recurrence, the polynomials $P_n$ share many properties of the Faber polynomials.  In particular, our choice of initial conditions guarantees that the leading coefficient of $P_n$ is positive and $P_n(0) \geq 0$ for all $n \in \mathbb{N}$.  Our choice of initial conditions also allows one to inductively show for all $n \in \mathbb{N}$ and $k \in \{0, 1, 2\}$ that 
\begin{equation}\label{partialfactor}
P_{3n + k}(z) = C_{3n + k} z^k \prod_{j=1}^n (z^3 - a_{3n+k,j}),
\end{equation}
for some $C_{3n + k} > 0$, and $a_{3n+k,j} \in \mathbb{C}$.  
Taken together, these properties enable the application of the interlacing argument presented in \cite[Theorem 3.1]{he1994zeros} to $P_n$, which allows one to additionally conclude that $a_{3n+k,j} \in (0,1)$.  Figure \ref{fig:Pn-plots} visually illustrates this property of $P_n$.
Now, suppose that $z \in \mathbb{C}$ satisfies $|z| = 1 + \varepsilon$, for $\varepsilon > 0$. Write $z = re^{i\theta}$. By \eqref{partialfactor} we have
\begin{align*}
    \left|P_{3n + k}(re^{i\theta}) \right| \!=\! \left| C_{3n + k} r^ke^{ik\theta} \prod_{j=1}^n (r^3e^{3i\theta} - a_{3n+k,j}) \right| 
    \!=\! \left|C_{3n + k} r^k\right| \prod_{j=1}^n |r^3e^{3i\theta} - a_{3n+k,j}|.
\end{align*}
Since $r$ and $a_{3n+k, j}$ are positive real numbers, $|r^3e^{3i\theta} - a_{3n+k,j}|$ is minimized when $e^{3i\theta} = 1$, which occurs when $\theta \in \{ 0, \frac{2}{3}\pi, \frac{4}{3}\pi\}$, and in particular, when $\theta = 0$.  
\end{proof}

\subsection{Proof of Theorem \ref{polynomialapproxcomplex}} \label{proofthm2}
Our proof strategy is motivated by the proof of Theorem \ref{chebxnfast} due to Sachdeva and Vishnoi \cite{sachdeva2014faster}. We construct a Markov random walk $Y_n$ that is a martingale and satisfies $\mathbb{E}[P_{Y_k}(z)] = z^k$, and then use a concentration argument to establish the result. For context on our proof strategy, we refer the reader to \cite[Pages 21--22]{sachdeva2014faster}.

\begin{proof}[Proof of Theorem \ref{polynomialapproxcomplex}]
We define a Markov chain $\{Y_k\}_{k=0}^\infty$ on $\mathbb{Z}$, which starts at $Y_0 = 0$ and, for $k \ge 1$, and has transition probabilities
$p_{j|i} := \mathbb{P}(Y_k = j | Y_{k-1} =i)$ given by
\begin{equation} \label{keyrandomwalk}
\begin{split}
&p_{1|0} = \frac{1}{2}, \quad p_{-1|0} = \frac{1}{2}, \quad p_{2|1} = \frac{3}{4}, \quad p_{-2|1} = \frac{1}{4},  \quad 
p_{-2|-1} = \frac{3}{4}, \quad p_{2|-1} = \frac{1}{4} , \\
&p_{i+1|i} = \frac{2}{3}, \quad p_{i-2|i} = \frac{1}{3}, \quad p_{-i-1|-i} = \frac{2}{3}, \quad p_{-i+2|-i} = \frac{1}{3}, \quad \text{for} \quad i \ge 2.
\end{split}
\end{equation}
Let $P_n(z)$ be the family of polynomials defined in \eqref{newpoly} for $n \ge 0$. 
We extend the definition of $P_n$ for negative $n$ by symmetry $P_{n}(z) = P_{-n}(z)$ for $n < 0$. Under this extension, we claim that
\begin{equation} \label{zkeq}
\mathbb{E}[P_{Y_k}(z)] = z^k.
\end{equation}
We prove this claim by induction.
The base case $k =0$ holds since $Y_0 = 1$ and $P_0(z) = 1$. Assume that \eqref{zkeq} holds up to $k-1$ for $k \ge 1$. By the tower property of conditional expectation, we have
$$
\mathbb{E}[P_{Y_k}(z)] = \mathbb{E}[ \mathbb{E}[ P_{Y_k}(z) | Y_{k-1}]].
$$
We will show 
$\mathbb{E}[ P_{Y_k}(z) | Y_{k-1}] = z P_{Y_{k-1}}(z)$, 
by which it follows that 
$$
\mathbb{E}[ \mathbb{E}[ P_{Y_k}(z) | Y_{k-1}]] 
=  z \mathbb{E}[P_{Y_{k-1}}(z)] = z^k,
$$
where the final step follows from the inductive hypothesis. 

To prove that $\mathbb{E}[ P_{Y_k}(z) | Y_{k-1}] = z P_{Y_{k-1}}(z)$ we consider three cases: $Y_{k-1} = 0$, $Y_{k-1} = 1$, and $Y_{k-1} \ge 2$. The cases $Y_{k-1} = -1$ and $Y_{k-2} \le -2$ follow by the symmetry of $Y_k$ and $P_n(z)$. First, if $Y_{k-1} = 0$, then by the definition of $Y_k$, we have
$$
\mathbb{E}[ P_{Y_k}(z) | Y_{k-1}=0] = \frac{1}{2} P_{1}(z) + \frac{1}{2} P_{-1}(z) = \frac{1}{2} z + \frac{1}{2} z = z P_0(z).
$$
Second, if $Y_{k-1} = 1$, then, by definition of $Y_k$ and $P_n(z)$ we have
$$
\mathbb{E}[ P_{Y_k}(z) | Y_{k-1}=1] = \frac{3}{4} P_{2}(z) + \frac{1}{4} P_{-2}(z) = \frac{3}{4} z^2 + \frac{1}{4} z^2 = z P_1(z).
$$
Third, assume that $Y_{k-1} \ge 2$. Rearranging the recurrence 
\eqref{newpoly} that defines $P_n(z)$ gives
\begin{equation} \label{keyrearrange}
\frac{2}{3} P_{n+1}(z) + \frac{1}{3} P_{n-2}(z) = z P_n(z), \quad \text{for} \quad n \ge 2.
\end{equation}
Using the definition of $Y_k$ and 
\eqref{keyrearrange} gives
$$
\mathbb{E}[ P_{Y_k}(z) | Y_{k-1} \ge 2] = \frac{2}{3} P_{Y_{k-1}+1}(z) + \frac{1}{3} P_{Y_{k-1}-2}(z) = z P_{Y_{k-1}}(z).
$$
Thus, we have proven  that $\mathbb{E}[ P_{Y_k}(z) | Y_{k-1}] = z P_{Y_{k-1}}(z)$, which 
implies $\mathbb{E}[ P_{Y_k}(z) ] = z^k$.

Next, we show that $Y_k$ concentrates near zero. Note that by definition, $Y_k$ is a martingale  $\mathbb{E}[ Y_{k+1} | Y_k,\ldots,Y_0 ] =  \mathbb{E}[ Y_{k+1} | Y_k ] = Y_k$, for $k \ge 0$. To show that $Y_k$ concentrates, we use Freedman's inequality \cite{freedman1975tail}, also see \cite{tropp2011freedman}. 

\begin{lemma}[Freedman's inequality \cite{freedman1975tail}] \label{freedman}
Let $\{Y_{k}\}_{k=0}^n$ be a martingale for a filtration $\{\mathcal{F}_k \}_{k=0}^n$. Let $X_k := Y_k - Y_{k-1}$ be the martingale differences for $k = 1,\ldots,n$. Define
$$
Z_n = \sum_{k=1}^n \mathbb{E}[X_k^2 | \mathcal{F}_{k-1}].
$$
Assume that the $X_k$ satisfies the one-sided bound: 
$
X_k \le K \text{almost surely for } k=1,\ldots,n.
$
Then, for any $t > 0$ and $\sigma^2 > 0$
$$
\mathbb{P}( \exists n \ge 0 :Y_n \ge t \text{ and } Z_n \le \sigma^2) \le \exp \left( - \frac{t^2}{2( \sigma^2 + K t / 3)} \right).
$$
\end{lemma}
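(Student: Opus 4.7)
The approach is the classical exponential moment method combined with a stopping-time argument. Fix a parameter $\theta \in (0, 3/K)$, to be optimized later, set $\psi(\theta) := (e^{\theta K} - 1 - \theta K)/K^2$, and introduce the process
$$M_n := \exp\bigl(\theta Y_n - \psi(\theta) Z_n\bigr).$$
The plan is to show $(M_n)_{n \ge 0}$ is a nonnegative supermartingale with $M_0 = 1$, apply optional stopping at an appropriate $\tau$, and then minimize the resulting tail bound over $\theta$.

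The first key ingredient is the pointwise Bennett-type inequality
$$e^{\theta x} \le 1 + \theta x + \psi(\theta)\, x^2 \quad \text{for all } x \le K,$$
verifiable by elementary calculus (the quadratic on the right matches the exponential in value and derivative at $x=0$ and dominates at $x=K$, and the sign of the third derivative confines the error). Applying it with $x = X_k$, taking conditional expectation given $\mathcal{F}_{k-1}$, and using the martingale property $\mathbb{E}[X_k \mid \mathcal{F}_{k-1}] = 0$ together with $1 + u \le e^u$ gives $\mathbb{E}[e^{\theta X_k} \mid \mathcal{F}_{k-1}] \le \exp\bigl(\psi(\theta)\, \mathbb{E}[X_k^2 \mid \mathcal{F}_{k-1}]\bigr)$. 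Since the increment $Z_k - Z_{k-1} = \mathbb{E}[X_k^2 \mid \mathcal{F}_{k-1}]$ is $\mathcal{F}_{k-1}$-measurable, this rearranges to $\mathbb{E}[M_n \mid \mathcal{F}_{n-1}] \le M_{n-1}$, establishing the supermartingale property.

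Next, define the stopping time $\tau := \inf\{n \ge 0 : Y_n \ge t \text{ and } Z_n \le \sigma^2\}$, which is adapted because $Z_n$ is $\mathcal{F}_{n-1}$-measurable. Doob's optional stopping theorem applied to the nonnegative supermartingale $(M_{\tau \wedge n})$ yields $\mathbb{E}[M_{\tau \wedge n}] \le 1$, and Fatou's lemma on the event $\{\tau < \infty\}$ then gives $\mathbb{E}[M_\tau \mathbf{1}_{\{\tau < \infty\}}] \le 1$. By construction, $M_\tau \ge \exp(\theta t - \psi(\theta) \sigma^2)$ on $\{\tau < \infty\}$, so Markov's inequality produces
$$\mathbb{P}\bigl(\exists n \ge 0 : Y_n \ge t \text{ and } Z_n \le \sigma^2\bigr) = \mathbb{P}(\tau < \infty) \le \exp\bigl(-\theta t + \psi(\theta)\, \sigma^2\bigr).$$

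The last step is to optimize over $\theta \in (0, 3/K)$. Using the standard estimate $\psi(\theta) \le \theta^2 / \bigl(2(1 - \theta K/3)\bigr)$ and choosing $\theta = t / (\sigma^2 + Kt/3)$, the exponent collapses to $-t^2 / \bigl(2(\sigma^2 + Kt/3)\bigr)$, yielding the stated bound. The main obstacle is obtaining the sharp pointwise inequality with precisely the prefactor $\psi(\theta)$: a cruder bound would produce a purely Hoeffding-type denominator $\sigma^2$, whereas the Bernstein-type correction $Kt/3$ relies on retaining both the exponential and the $u^2$ term in the Taylor expansion. Once this single analytic inequality is in hand, the supermartingale-plus-stopping-plus-Markov structure delivers the rest of the conclusion without further obstacles.
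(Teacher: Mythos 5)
The paper does not prove this lemma at all: it is stated as a classical result and attributed to Freedman (1975), with Tropp (2011) as a secondary reference, so there is no internal proof to compare against. Your argument is the standard exponential-supermartingale proof of Freedman's inequality and it is correct: the pointwise bound $e^{\theta x}\le 1+\theta x+\psi(\theta)x^2$ for $x\le K$ (monotonicity of $u\mapsto (e^{u}-1-u)/u^2$), the supermartingale property of $M_n=\exp(\theta Y_n-\psi(\theta)Z_n)$ using that $Z_n-Z_{n-1}$ is $\mathcal{F}_{n-1}$-measurable, optional stopping plus Fatou at $\tau=\inf\{n: Y_n\ge t,\ Z_n\le\sigma^2\}$, and the Bernstein-type estimate $\psi(\theta)\le \theta^2/(2(1-\theta K/3))$ with $\theta=t/(\sigma^2+Kt/3)$ all check out and reproduce the stated exponent exactly. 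This is essentially the proof in the cited references, so no discrepancy with the paper arises; the only cosmetic point is that the lemma as printed indexes a finite martingale $\{Y_k\}_{k=0}^n$ while the conclusion quantifies over all $n\ge 0$, and your infinite-horizon formulation is the one that matches the conclusion being proved.
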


Note that Freedman's inequality can be viewed as a refined version of Azuma's inequality, which also applies in this context, but leads to a weaker bound. Let $X_k = Y_k - Y_{k-1}$ for $k = 1,\ldots,n$. Note that $|X_k| \le 3$. To apply Freedman's inequality, we need a bound or estimate on
$$
Z_n = \sum_{k=1}^n \mathbb{E}[X_k^2 | Y_{k-1}].
$$
We claim that $Z_n \le  5 n/2$, surely, for $k=1,\ldots,n$. Indeed, from \eqref{keyrandomwalk}, it follows that
\begin{equation}
\mathbb{E}[X_k^2 | Y_{k-1} = 0 ] = 1, \quad
\mathbb{E}[X_k^2  | Y_{k-1} = \pm 1 ]= 3 \quad \text{and} \quad
\mathbb{E}[X_k^2 | |Y_{k-1}| \ge 2 ] = 2 .
\end{equation}
By the definition of the transition probabilities \eqref{keyrandomwalk},
we have $Z_1=1$, and if $Z_k =  Z_{k-1}+3$, then $Z_{k+1}= Z_k+2$. Hence, the average value of $Z_k-Z_{k-1}$ is at most $(3+2)/2$, which implies  $Z_n \le  5 n/2$. Applying Freedman's inequality Lemma \ref{freedman} gives
$$
\mathbb{P}( Y_n \ge s) \le \exp \left( - \frac{s^2}{5 n + 2 s} \right).
$$
Using the fact that the bound trivially holds when $s \ge n$ and  setting $s = t\sqrt{n}$ gives 
\begin{equation} \label{concentrate}
\mathbb{P}( Y_n \ge t \sqrt{n}) \le \exp \left( - \frac{t^2}{7} \right).
\end{equation}
We note that a stronger bound could be obtained by adding restrictions to $n$ and $t$.

In the following, we use the fact that $\mathbb{E}[P_{Y_k}(z)] = z^k$ together with the concentration result \eqref{concentrate} to complete the proof. By the definition of expectation,
\begin{equation} \label{znexpansion}
z^n = \sum_{k=-n}^n \mathbb{P}(Y_n =k) P_k(z) = \sum_{k=0}^n \beta_k P_k(z),
\quad \text{for} \quad \beta_k = \mathbb{P}(|Y_n| = k),
\end{equation}
where we used the fact that $P_{n}(z) = P_{-n}(z)$. It follows from \eqref{znexpansion} that
$$
\left| z^n - \sum_{k = 0}^{\lfloor t \sqrt{n} \rfloor} \beta_k P_k(z) \right| = \left| \sum_{|k| > t \sqrt{n}}
\mathbb{P}(Y_n = k) P_k(z) \right|,
$$
where we use the fact that $\mathbb{P}(Y_n = k) =0$ for $|k| > n$.
By Lemma \ref{step1}, we have $|P_k(z)| \le 1$ for $z \in \Gamma$. Hence, by the triangle inequality 
$$
\left| \sum_{|k| > t \sqrt{n}}  \mathbb{P}(Y_n = k) P_k(z) \right| \le \sum_{|k| > t \sqrt{n}} \mathbb{P}(Y_n = k) = \mathbb{P}(|Y_{n}| \ge t \sqrt{n}) 
\quad \text{for} \quad z \in \Gamma.
$$
By symmetry, we can use \eqref{concentrate} to obtain the two-sided bound
$\mathbb{P}(|Y_k| \ge t \sqrt{n}) \le 2 e^{-t^2/7},$ which implies 
$$
\left| z^n - \sum_{k = 0}^{\lfloor t \sqrt{n} \rfloor} \beta_k P_k(z) \right|  \le 2 e^{-t^2/7}, \quad  \text{for} \quad z \in \Gamma,
$$
as was to be shown.
\end{proof}

\subsection{Proof of Theorem \ref{algoworks}}
\label{proofalgoworks}

\begin{proof}[Proof of Theorem \ref{algoworks}]
Let $(\lambda_j, \vec{\phi}_j)$ denote the eigenvalue/eigenvector pairs of $\vec{A}$, where $\{\vec{\phi}_j\}$ form a basis of $\mathbb{C}^n$.   
Let $a_1, \dots, a_n$ be the coefficients in the eigenvector expansion of $\vect{v}_0$:
$$\vect{v}_0 = \sum_{j=1}^n a_j\vect{\phi}_j.$$
Multiplying the update step of Algorithm \ref{algnew} (lines 3 to 5) by $h_k$ gives 
\begin{equation}\label{eq:recurrence_h}
h_k\vect{v}_{k+1} = h_k\vect{A}\vect{x}_{k} = \vect{A}\vect{v}_k - \frac{\beta}{h_{k-1}h_{k-2}} \vect{A} \vect{x}_{k-3} = \vect{A}\vect{v}_k - \frac{\beta}{h_{k-1}h_{k-2}}\vect{v}_{k-2}.
\end{equation}
Define $$\vect{w}_k = \left( \frac{3}{2\lambda_*}\right)^k\left(\prod_{j=0}^{k-1} h_j\right) \vect{v}_k.$$  
By multiplying both sides of \eqref{eq:recurrence_h} by $
(3/(2\lambda_*))^{k+1}
\prod_{j=0}^{k-1} h_j$, substituting $\beta= 4\lambda_*^3/27$, and expressing the resulting equation in terms of $\vect{w}_k$ gives
\begin{equation}\label{eq:recurrence_w2} \vect{w}_{k+1} = \frac{3}{2}\frac{\vect{A}}{\lambda_*}\vect{w}_k - \frac{1}{2} \vect{w}_{k-2}.
\end{equation}
By writing the two initial power iterations on the matrix $\frac{2}{3}\vect{A}$ (run in Algorithm \ref{algnew} line 1) in terms of $\vect{w}_k$, one can verify that
\begin{equation}\label{eq:initial_conditions_w}
\vect{w}_1 = \frac{1}{\lambda_*}\vect{A}\vect{w}_0 \quad \text{and} \quad \vect{w}_2 = \frac{1}{\lambda_*^2}\vect{A}^2\vect{w}_0.
\end{equation} 
Since the initial conditions \eqref{eq:initial_conditions_w} and the recurrence relation \eqref{eq:recurrence_w2} match the definition \eqref{newpoly} of $P_n$ with input $z \to \vect{A}/\lambda_*$, we conclude that 
$$
\vect{w}_N = P_N(\vect{A}/\lambda_*)\vect{v}_0 = \sum_{j=1}^n a_jP_N(\vect{A}/\lambda_*)\vect{\phi}_j = \sum_{j=1}^n P_N(\lambda_j/\lambda_*) a_j\vect{\phi}_j.
$$
By Theorem \ref{newpolyboundedgrow} and the assumption that $\lambda_2/\lambda_*, \dots \lambda_n/\lambda_* \in \Gamma$, we have
\begin{equation} \label{keyboundfromthm}
|P_N(\lambda_1/\lambda_*)| \ge  \frac{1}{3}(1 + \sqrt{|\lambda_1/\lambda_*| - 1})^N
\quad \text{and} \quad |P_N(\lambda_j/\lambda_*)| \leq 1, \quad \text{for} \quad  j \ge 2. 
\end{equation}
Dividing $\vect{w}_N$ by $a_1 P_N(\lambda_1/\lambda_*)$ gives
$$
\frac{\vect{w}_N}{a_1P_n(\lambda_1/\lambda_*)} = \sum_{j=1}^N \frac{a_j P_N(\lambda_j/\lambda_*)}{a_1P_N(\lambda_1/\lambda_*)}\vect{\phi}_j 
= \vect{\phi}_1 + \sum_{j=2}^N \frac{a_j P_N(\lambda_j/\lambda_*)}{a_1P_N(\lambda_1/\lambda_*)}\vect{\phi}_j.
$$
The second term on the right-hand side can be bounded as follows:
\begin{align*}
    \left\| \sum_{j=2}^n \frac{a_j P_N(\lambda_j/\lambda_*)}{a_1P_N(\lambda_1/\lambda_*)}\vect{\phi}_j \right\|_2 &\leq \frac{1}{P_N(\lambda_1/\lambda_*)} \sum_{j=2}^n \left( \left| \frac{a_j}{a_1} \right| \left| P_N(\lambda_j/\lambda_*)\right| \|\vec{\phi}_j\|_2 \right) \\
    &\leq \left(1 + \sqrt{|\lambda_1/\lambda_*| - 1}\right)^{-N} \sum_{j=2}^n \left| \frac{a_j}{a_1} \right|,
\end{align*}
where the first inequality follows from the triangle inequality and the second inequality follows from \eqref{keyboundfromthm}.
We conclude that
$$
\frac{\vect{w}_N}{|a_1P_n(\lambda_1/\lambda_*)|}  = \vec{\phi}_1
 e^{i \theta} + \mathcal{O}\left(  (1+\sqrt{|\lambda_1/\lambda_*| - 1})^{-N} \right),
$$
as $N \to 0$, where the phase $e^{i\theta} := a_1/|a_1|$, and the constant implied in the big-$\mathcal{O}$ notation only depends on $a_1, \dots, a_n$.  Since $\vect{x}_N = \vect{w}_N/\|\vect{w}_N\|_2$ and $\vect{w}_N$ are parallel, this completes the proof.
\end{proof}

\section{Conclusion}  \label{discussion}

In this paper, we introduced a family of polynomials $P_n$, which are closely related to Faber polynomials on the deltoid region $\Gamma$ of the complex plane. We generalized two fundamental results based on Chebyshev polynomials to $\Gamma$. First, we proved that $|P_n| \le 1$ on $\Gamma$ and $|P_n(z)| \ge \frac{1}{3}(1+\sqrt{|z|-1})^n$ outside the unit disk. Second, we proved that $z^n$ is approximately a polynomial of degree $\sim \sqrt{n}$ in $\Gamma$ by relating the coefficients of $z^n$ in the basis $ P_n$ to random walk probabilities. These results generalize classic properties of Chebyshev polynomials and the results of \cite{newman1976approximation,sachdeva2014faster} that apply to the interval $[-1,1]$ to a region of the complex plane.

Additionally, we applied our approximation theory to numerical linear algebra. 
In particular, we generalized the results of \cite{xu2018accelerated} by using a higher-order momentum term to accelerate power iteration for certain non-symmetric matrices. We similarly generalized the dynamic momentum power method of \cite{austin2024dynamically}.

We emphasize that our results are more widely applicable, including applications to solving linear systems and computing functions of matrices such as the matrix exponential, whenever the eigenvalues lie in a deltoid region of the complex plane. While this condition is restrictive, we demonstrated it applies to random walks on directed graphs, which have smaller complex eigenvalues.  The results in this manuscript are based on a specific third-order recurrence, which has the potential to be generalized to families of higher-order recurrence formulas. The analysis of such generalizations, however, may present technical challenges, as roots of higher-order polynomials are not, in general, explicitly available.

\bibliographystyle{plain}
\bibliography{refs}

\appendix

\section{Formulas for roots and coefficients}
\label{appendixA}

This appendix contains formulas for the roots of a cubic polynomial \eqref{cas1trig} and formulas for the solution to a three-by-three linear system of equations involving the roots \eqref{cas2trig}. The correctness of these equations can be verified by direct substitution. As the calculations, although elementary, are somewhat lengthy, we omit them for brevity. Additionally, a Mathematica notebook verifying these equations by direct substitution is included in the GitHub repository associated with this paper:
\begin{center}\url{https://github.com/petercowal/higher-order-momentum-power-methods}.
\end{center}

\begin{lemma}
Let $z = (1+\delta)^{1/3}$ for $\delta > 0$. Then, the roots of the polynomial 
\begin{equation}  \label{pzappendix}
p_z(r) = r^3 - \frac{3z}{2}r^2 + \frac{1}{2}
\end{equation}
are
\begin{equation} \label{cas1trig}
\begin{split}
r_1 &= \frac{1}{2} (\delta+1)^{1/3} \left(1 + 2 \cos \left( \frac{2}{3} \arccot \left( \sqrt{\delta} \right) \right) \right)\\
r_2 &= \frac{1}{2} (\delta+1)^{1/3} \left(1 + 2 \cos \left( \frac{2}{3} \arccot \left( \sqrt{\delta} \right) + \frac{2 \pi}{3} 
\right) \right)\\
r_3 &= \frac{1}{2} (\delta+1)^{1/3} \left(1 + 2 \cos \left( \frac{2}{3} \arccot \left( \sqrt{\delta} \right) - \frac{2 \pi}{3}
\right) \right).\\
\end{split}
\end{equation}
\end{lemma}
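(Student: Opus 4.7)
The plan is to derive these formulas from scratch by the classical trigonometric (Viète) method for the casus irreducibilis of the cubic, rather than verifying them \emph{a posteriori}.

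First, I would depress the cubic $p_z(r) = r^3 - \tfrac{3z}{2}r^2 + \tfrac{1}{2}$ by the substitution $r = s + z/2$, obtaining $s^3 + ps + q = 0$ with
\[ p = -\tfrac{3z^2}{4}, \qquad q = \tfrac{1}{2} - \tfrac{z^3}{4} = \tfrac{1-\delta}{4}, \]
where the last equality uses the hypothesis $z^3 = 1+\delta$. The discriminant of the depressed cubic works out to $-(4p^3 + 27q^2) = 27\delta/4 > 0$, so for $\delta > 0$ we are squarely in the casus irreducibilis with three distinct real roots, and Viète's trigonometric substitution applies.

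Next, plugging $s = 2\sqrt{-p/3}\,\cos\theta$ into $s^3 + ps + q = 0$ and using the identity $4\cos^3\theta - 3\cos\theta = \cos(3\theta)$ reduces the equation to
\[ \cos(3\theta) = \frac{-q}{2(-p/3)^{3/2}} = \frac{\delta-1}{\delta+1}, \]
where I have used $(-p/3)^{3/2} = (z^2/4)^{3/2} = z^3/8 = (1+\delta)/8$. The three solutions modulo $2\pi$ are $\theta_k = \tfrac{1}{3}\arccos\bigl(\tfrac{\delta-1}{\delta+1}\bigr) + \tfrac{2\pi k}{3}$ for $k \in \{0,1,2\}$, and undoing the two substitutions with $\sqrt{-p/3} = z/2$ gives $r_k = \tfrac{z}{2}\bigl(1 + 2\cos\theta_k\bigr)$. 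Since cosine has period $2\pi$, the shifts $\{0,+2\pi/3,-2\pi/3\}$ appearing in the lemma are the same family as $\{k=0,1,2\}$ up to relabeling, so the structural form of the stated formulas is already matched.

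The final step is the trigonometric identification $\tfrac{1}{3}\arccos\bigl(\tfrac{\delta-1}{\delta+1}\bigr) = \tfrac{2}{3}\arccot(\sqrt{\delta})$. Setting $\theta = \arccot(\sqrt{\delta}) \in (0,\pi/2)$, the double-angle formula gives
\[ \cos(2\theta) = \frac{\cot^2\theta - 1}{\cot^2\theta + 1} = \frac{\delta-1}{\delta+1}, \]
and since $2\theta \in (0,\pi)$ lies in the principal branch of arccos, this inverts unambiguously to $2\theta = \arccos\bigl((\delta-1)/(\delta+1)\bigr)$, which is exactly what is needed. The only real obstacle I anticipate is careful bookkeeping: choosing the correct branch of the square root in Viète's substitution and matching the three shift labels; once the trigonometric identity above is in hand, the lemma follows. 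As a safeguard against arithmetic slips one can always fall back on verifying $p_z(r_k) = 0$ by direct substitution, expanding $(1+2\cos\alpha)^3$ and collapsing with $4\cos^3\alpha - 3\cos\alpha = \cos(3\alpha)$ — which is the approach the paper indicates.
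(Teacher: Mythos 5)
Your derivation is correct, and it takes a genuinely different route from the paper, which simply asserts that the formulas ``can be verified by direct substitution,'' omits the computation as ``elementary but lengthy,'' and points to a Mathematica notebook. You instead \emph{derive} the roots: depress the cubic to $s^3 + ps + q$ with $p = -3z^2/4$, $q = (1-\delta)/4$; note the discriminant $-(4p^3+27q^2) = 27\delta/4 > 0$ puts you in the casus irreducibilis; apply Vi\`ete's substitution $s = z\cos\theta$ to get $\cos(3\theta) = (\delta-1)/(\delta+1)$; and close with the half-angle identity $\arccos\bigl(\tfrac{\delta-1}{\delta+1}\bigr) = 2\arccot(\sqrt{\delta})$, valid because $2\arccot(\sqrt{\delta}) \in (0,\pi)$ sits in the principal branch. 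The paper's approach requires no cleverness and is easy to automate, but it conceals where the $\arccot$ form comes from; your route makes that visible, isolating the half-angle identity as the reason Vi\`ete's natural $\arccos$ collapses to the $\arccot$ in the statement. The two small bookkeeping points you flag are both fine as you handle them: $\sqrt{-p/3} = z/2$ is the unambiguously positive root since $z = (1+\delta)^{1/3} > 0$, and the label match between $k \in \{0,1,2\}$ and the paper's shifts $\{0, +2\pi/3, -2\pi/3\}$ is just $k=2 \leftrightarrow -2\pi/3 \pmod{2\pi}$.
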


\begin{lemma}
Let $z = (1+\delta)^{1/3}$ for $\delta > 0$, and
 $r_1,r_2,r_3$ be as defined in
\eqref{cas1trig}. Then, the linear system of equations
\begin{equation} \label{coeffeqappedix}
\begin{split}
c_1 + c_2 + c_3 =& 1 \\
c_1 r_1 + c_2 r_2 + c_3 r_3 =& z \\
c_1 r_1^2 + c_2 r_2^2 + c_3 r_3^2 =& z^2
\end{split}
\end{equation}
has solution
\begin{equation} \label{cas2trig}
\begin{split}
c_1 =& \frac{1}{3} \left(1 + (\delta+1)^{1/2} \sin \left(\frac{1}{3} \arccot\left(\sqrt{\delta}\right)\right)\right) \\[5pt]
c_2 =& \frac{1}{3} \left(1 + (\delta+1)^{1/2} \sin \left( \frac{1}{3} \arccot\left(\sqrt{\delta}\right)
- \frac{2\pi}{3} \right)\right) \\[5pt]
c_3 =& \frac{1}{3} \left(1 + (\delta+1)^{1/2} \sin \left(\frac{1}{3} \arccot\left(\sqrt{\delta}\right)
+ \frac{2\pi}{3}  \right)\right).\\[5pt]
\end{split}
\end{equation}
\end{lemma}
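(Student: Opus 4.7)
The plan is to exploit the Vandermonde structure of \eqref{coeffeqappedix}. For $\delta > 0$, the discriminant of $p_z$ computed in the proof of Lemma \ref{step1} is $\Delta(p_z) = \tfrac{27}{4}(z^3 - 1) = \tfrac{27}{4}\delta \neq 0$, so the roots $r_1, r_2, r_3$ in \eqref{cas1trig} are distinct and the coefficient matrix of \eqref{coeffeqappedix} is an invertible Vandermonde matrix. Hence the solution is unique and it suffices to verify that \eqref{cas2trig} satisfies all three equations.

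Set $\theta = \tfrac{1}{3}\arccot(\sqrt{\delta})$, so that $\cot(3\theta) = \sqrt{\delta}$ and, since $3\theta \in (0,\pi/2)$,
\[
(1+\delta)^{1/2}\sin(3\theta) = 1, \qquad (1+\delta)^{1/2}\cos(3\theta) = \sqrt{\delta}.
\]
Write $A = (1+\delta)^{1/2}$, $s_j = \sin(\theta + k_j \cdot 2\pi/3)$ with $(k_1, k_2, k_3) = (0, -1, 1)$, and $C_j = \cos(2\theta + m_j \cdot 2\pi/3)$ with $(m_1, m_2, m_3) = (0, 1, -1)$; then \eqref{cas1trig} and \eqref{cas2trig} read $c_j = \tfrac{1}{3}(1 + A s_j)$ and $r_j = \tfrac{z}{2}(1 + 2 C_j)$.

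The verification is by direct expansion. The first equation $\sum_j c_j = 1$ is immediate from $\sum_j s_j = 0$. For the second, expanding $c_j r_j = \tfrac{z}{6}(1 + A s_j)(1 + 2 C_j)$ and summing over $j$ kills $\sum_j s_j$ and $\sum_j C_j$; the only surviving nontrivial term is $\sum_j s_j C_j$, which by a product-to-sum computation equals $\tfrac{3}{2}\sin(3\theta)$, and, combined with $A\sin(3\theta) = 1$, gives $z$. The third equation is handled identically after noting $(1 + 2 C_j)^2 = 3 + 4 C_j + 2 \cos(4\theta + 2 m_j \cdot 2\pi/3)$; the analogous cross sum $\sum_j s_j \cos(4\theta + 2 m_j \cdot 2\pi/3)$ again collapses (this time to $-\tfrac{3}{2}\sin(3\theta)$), and combining with $A\sin(3\theta) = 1$ produces $z^2$.

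The main obstacle is bookkeeping rather than conceptual difficulty: one must carefully track three slightly different $120^\circ$ phase conventions (for the $s_j$, for the $C_j$, and for the angles generated by $r_j^2$) in order to identify which symmetric sums vanish and which reduce to multiples of $\sin(3\theta)$. A conceptually cleaner but computationally equivalent route passes through the Lagrange closed form $c_j = \delta/(6\, r_j\, (z - r_j)^2)$, derived from $p_z(z) = -\delta/2$ and $p_z'(r_j) = 3 r_j (r_j - z)$; matching this against \eqref{cas2trig} still requires essentially the same trigonometric manipulations. As the paper notes, a symbolic-computation verification (included in the accompanying code) is a reliable way to conclude.
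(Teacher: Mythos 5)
Your proof is correct and takes the same route as the paper, which simply asserts that \eqref{cas2trig} can be verified by direct substitution (deferring the lengthy algebra to an accompanying Mathematica notebook). You actually supply the omitted computation — the Vandermonde uniqueness observation, the identities $(1+\delta)^{1/2}\sin(3\theta)=1$ and $\sum_j s_j C_j = \tfrac{3}{2}\sin(3\theta)$, and the collapse of the remaining equally-spaced phase sums — and these all check out.
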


\section{Estimates on roots and coefficients} \label{appendixB}

In this section, we derive estimates on the roots $r_1,r_2,r_3$ of the polynomial $p_z(r)$ defined in \eqref{pzappendix}, and coefficients $c_1,c_2,c_3$ defined by  \eqref{coeffeqappedix}. The key estimate is Lemma \ref{lemtrig} that establishes a lower bound on $r_1$ for $z=1+\varepsilon$, for small $\varepsilon > 0$. We establish both an upper and lower bound to show that our analysis is sharp up to lower-order terms.

\begin{lemma}  \label{lemtrig}
Let $z = 1+\varepsilon$. Then,
$1+ \sqrt{\varepsilon} \le r_1 \le 1 + \sqrt{\varepsilon} + 2\varepsilon$, for
$0 \le \varepsilon \le 1/4$.
\end{lemma}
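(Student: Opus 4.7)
The plan is to bypass the explicit trigonometric formula \eqref{cas1trig} and instead work directly with the defining cubic $p_z(r) = r^3 - \tfrac{3}{2}(1+\varepsilon) r^2 + \tfrac{1}{2}$, combining $p_z(r_1) = 0$ with a simple monotonicity argument.

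First I would compute $p_z'(r) = 3r(r - (1+\varepsilon))$, whose zeros are $0$ and $1+\varepsilon$. The local minimum value is $p_z(1+\varepsilon) = \tfrac{1}{2}(1-(1+\varepsilon)^3) \le 0$, and $p_z$ is strictly increasing on $[1+\varepsilon,\infty)$ with $p_z(r) \to +\infty$. Hence $r_1$, the largest real root of $p_z$, is the unique zero of $p_z$ in $[1+\varepsilon,\infty)$. Because $\sqrt{\varepsilon} \ge \varepsilon$ for $\varepsilon \in [0,1]$ (and in particular for $\varepsilon \in [0,1/4]$), both test values $1+\sqrt{\varepsilon}$ and $1+\sqrt{\varepsilon}+2\varepsilon$ lie in this monotone-increasing region.

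The heart of the proof is then two explicit polynomial evaluations. Setting $u = \sqrt{\varepsilon}$ and expanding gives
\begin{equation*}
p_z(1+u) = -2 u^3 - \tfrac{3}{2} u^4 \le 0 \quad \text{and} \quad p_z(1+u+2u^2) = 4 u^3 + \tfrac{9}{2} u^4 + 6 u^5 + 2 u^6 \ge 0
\end{equation*}
for $u \ge 0$. The constant, $u$, and $u^2$ terms must cancel in each of these expansions because $1+u = 1+\sqrt{\varepsilon}$ is the first-order Taylor approximation of $r_1$ in the variable $\sqrt{\varepsilon}$, so only $u^3$ and higher-order terms survive. Granted these sign computations, the monotonicity of $p_z$ on $[1+\varepsilon,\infty)$ immediately yields $1+\sqrt{\varepsilon} \le r_1 \le 1+\sqrt{\varepsilon}+2\varepsilon$.

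The only nontrivial step is carrying out the two algebraic verifications, so there is no substantive analytic obstacle. The stated range $\varepsilon \le 1/4$ is not strictly required for this lemma (the argument works for any $\varepsilon \le 1$); it is presumably chosen to dovetail with Lemmas \ref{extendingbound} and \ref{extendingbound2}, which handle the complementary range of $\varepsilon$ by different means.
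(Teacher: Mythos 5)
Your proof is correct, and it takes a genuinely different and cleaner route than the paper's. The paper proves this lemma by starting from the closed-form trigonometric expression for $r_1$ in \eqref{cas1trig} (with $z=(1+\delta)^{1/3}$), applying alternating-series/Taylor bounds to each factor, combining those into two-sided bounds for $r_1$ in $\delta$, substituting $\delta = 3\varepsilon+3\varepsilon^2+\varepsilon^3$, and then simplifying using $\varepsilon \le 1/4$ — a multi-step estimation chain. You instead bypass the closed form entirely: you observe that $p_z'(r) = 3r(r-(1+\varepsilon))$, so $p_z$ has its local minimum at $r=1+\varepsilon$ with $p_z(1+\varepsilon)\le 0$, is strictly increasing on $[1+\varepsilon,\infty)$, and hence has a unique zero $r_1$ there; then two exact polynomial evaluations,
\begin{equation*}
p_z(1+u) = -2u^3 - \tfrac{3}{2}u^4 \le 0,
\qquad
p_z(1+u+2u^2) = 4u^3 + \tfrac{9}{2}u^4 + 6u^5 + 2u^6 \ge 0
\end{equation*}
(with $u=\sqrt{\varepsilon}$), pin $r_1$ between the two test points by monotonicity. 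I verified both expansions; they are exact identities, so no approximation is involved. Your approach trades the paper's series estimates for two finite algebraic identities and an intermediate-value argument, which is more elementary, self-contained, and transparent about why the lower-order terms cancel. You are also right that the hypothesis $\varepsilon \le 1/4$ is not needed: your argument requires only $\sqrt{\varepsilon}\ge\varepsilon$ (i.e. $\varepsilon\le 1$) to place the lower test point in the monotone region, and in fact the lower bound $r_1 \ge 1+\sqrt{\varepsilon}$ holds for all $\varepsilon\ge 0$ since when $\varepsilon>1$ one already has $r_1 \ge 1+\varepsilon > 1+\sqrt{\varepsilon}$; your method would thus subsume Lemmas \ref{extendingbound} and \ref{extendingbound2} as well. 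The one small thing you leave implicit is that $r_1$ as defined by \eqref{cas1trig} is indeed the largest real root of $p_z$ (so that it is the unique zero on $[1+\varepsilon,\infty)$); this is immediate from the formula since $\frac{2}{3}\arccot(\sqrt{\delta})\in(0,\pi/3)$ while the arguments for $r_2,r_3$ are shifted by $\pm\frac{2\pi}{3}$, but it deserves a sentence.
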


\begin{proof}[Proof of Lemma \ref{lemtrig}]
Let $z = 1+\varepsilon = (1+\delta)^{1/3}$ for $\delta > 0$. 
We establish upper and lower bounds on $r_1$ by using the formula \eqref{cas1trig}. 
Note that the assumption $0 \le \varepsilon \le 1/4$ implies that $0 \le \delta \le (1+1/4)^3 - 1 < 1$. Recall that alternating series whose terms decrease in magnitude can be bounded above and below by truncating the series at a positive and negative term, respectively; by applying this alternating series bound to the Taylor series, we can deduce that
\begin{eqnarray}
&1 + \frac{\delta}{3} - \frac{\delta^2}{9} \le (1+\delta)^{1/3} \le 1 + \frac{\delta}{3},
\label{est1} \\
&\frac{\pi}{3} - \frac{2}{3} \sqrt{\delta} \le \frac{2}{3} \arccot(\sqrt{\delta}) \le \frac{\pi}{3} - \frac{2}{3}\sqrt{\delta} + \frac{2 \delta^{3/2}}{9},
 \label{est2} \\
&\frac{1}{2} + \frac{\sqrt{3}}{2} x - \frac{x^2}{4} -\frac{x^3}{4 \sqrt{3}} \le \cos(\pi/3-x) \le  \frac{1}{2} + \frac{\sqrt{3}}{2} x,
\label{est3} 
\end{eqnarray}
where the final series alternates every two terms. 
Using \eqref{est1}, \eqref{est2}, and \eqref{est3}, 
to upper and lower bound the formula for $r_1$ in \eqref{cas2trig} gives
\begin{equation} \label{combine1}
\frac{2 \delta^{7/2}}{243 \sqrt{3}}-\frac{11 \delta^{5/2}}{81 \sqrt{3}}+\frac{7 \delta^{3/2}}{27 \sqrt{3}}+\frac{\delta^3}{81}-\frac{4 \delta^2}{27}+\frac{2 \delta}{9}+\frac{\sqrt{\delta}}{\sqrt{3}}+1  \le r_1  \le  
1 + \frac{\sqrt{\delta}}{\sqrt{3}} + \frac{\delta}{3} -\frac{\delta^{5/2}}{9 \sqrt{3}}.
\end{equation}
Since we may assume $0 \le \delta < 1$, we can deduce the simplified bound
\begin{equation} \label{combine1cor}
1 + \frac{\sqrt{\delta}}{\sqrt{3}} \le r_1 \le 1 + \frac{\sqrt{\delta}}{\sqrt{3}} + \frac{\delta}{3}.
\end{equation}
Next, we set $1+\varepsilon = (1+\delta)^{1/3}$, which implies
\begin{equation} \label{eps}
3\varepsilon + 3 \varepsilon^2 + \varepsilon^3 = \delta.
\end{equation}
Combining \eqref{combine1cor} and \eqref{eps} gives
\begin{equation} \label{deltaeps}
1 + \frac{\sqrt{3\varepsilon + 3 \varepsilon^2 + \varepsilon^3}}{\sqrt{3}} \le r_1 \le 1 + \frac{\sqrt{3\varepsilon + 3 \varepsilon^2 + \varepsilon^3}}{\sqrt{3}} + \frac{3\varepsilon + 3 \varepsilon^2 + \varepsilon^3}{3}.
\end{equation}
We note that on both sides of \eqref{deltaeps}, we can factor out 
$\sqrt{3 \varepsilon}$
to write
\begin{equation} \label{factoreps3}
\frac{\sqrt{3\varepsilon + 3 \varepsilon^2 + \varepsilon^3}}{\sqrt{3}} = \sqrt{\varepsilon} \sqrt{1 + \varepsilon + \varepsilon^2/3}.
\end{equation}
Using the inequality $1 \le \sqrt{1+x} \le 1 + x/2,$ can bound \eqref{factoreps3} above and below by
\begin{equation} \label{epsbound}
\sqrt{\varepsilon} \leq \sqrt{\varepsilon} \sqrt{1 + \varepsilon + \varepsilon^2/3} \le \sqrt{\varepsilon} + \frac{1}{2}\varepsilon^{3/2} + \frac{1}{6}\varepsilon^{5/2} \leq \sqrt{\varepsilon} + \frac{1}{4}\varepsilon + \frac{1}{48}\varepsilon,
\end{equation}
where we also use our assumption that $0 \le \varepsilon \le 1/4$.
From \eqref{deltaeps}, \eqref{epsbound}, the fact that $0 \le \varepsilon \le 1/4$, 
we can deduce from the simplified bound $1+ \sqrt{\varepsilon} \le r_1 \le 1 + \sqrt{\varepsilon} + 2\varepsilon$.
\end{proof}

\begin{lemma} \label{extendingbound}
Let $z = 1 + \varepsilon$. Then
$r_1 \ge 1 + \sqrt{\varepsilon},$ for $1 \ge \varepsilon \ge 1/4.$
\end{lemma}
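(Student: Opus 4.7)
The plan is to verify the bound by direct substitution: I will show that $p_{1+\varepsilon}(1+\sqrt{\varepsilon}) \le 0$, and then use the fact that $r_1$ is the largest real root of $p_{1+\varepsilon}$ to conclude $r_1 \ge 1+\sqrt{\varepsilon}$.

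Concretely, writing $s = \sqrt{\varepsilon}$ and $z = 1+s^2$, I would plug $r = 1+s$ into $p_z(r) = r^3 - \tfrac{3z}{2} r^2 + \tfrac{1}{2}$ and expand using
$(1+s)^3 = 1+3s+3s^2+s^3$ and $(1+s^2)(1+s)^2 = 1 + 2s + 2s^2 + 2s^3 + s^4$. After cancellation the constant, linear, and quadratic terms in $s$ all vanish, and one is left with
$$p_{1+\varepsilon}(1+\sqrt{\varepsilon}) \;=\; -2\varepsilon^{3/2} - \tfrac{3}{2}\varepsilon^2,$$
which is strictly negative for every $\varepsilon > 0$.

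To finish, recall from the proof of Lemma \ref{step1} that the discriminant $\Delta(p_z) = \tfrac{27}{4}(z^3-1)$ is positive when $z = 1+\varepsilon$ with $\varepsilon > 0$, so $p_{1+\varepsilon}$ has three distinct real roots $r_1, r_2, r_3$. The leading coefficient of $p_{1+\varepsilon}$ is positive, hence $p_{1+\varepsilon}(r) \to +\infty$ as $r \to +\infty$. Combined with $p_{1+\varepsilon}(1+\sqrt{\varepsilon}) < 0$, the intermediate value theorem produces a real root strictly exceeding $1+\sqrt{\varepsilon}$, which must therefore be bounded above by the largest root $r_1$. This gives $r_1 \ge 1+\sqrt{\varepsilon}$.

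The only step with any calculational content is the expansion in the first paragraph; the cancellation is routine but essential, and the main (mild) obstacle is simply carrying out the algebra cleanly. I note that the argument as written actually yields $r_1 \ge 1+\sqrt{\varepsilon}$ for every $\varepsilon > 0$, which is consistent with the authors' strategy of using the sharper Taylor-based Lemma \ref{lemtrig} for small $\varepsilon$ and peeling off the range $\varepsilon \ge 1/4$ here, where a quick sign check on $p_{1+\varepsilon}$ suffices and the higher-order Taylor tails from Lemma \ref{lemtrig} are no longer small.
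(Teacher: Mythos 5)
Your proof is correct, and it takes a genuinely different route from the paper. The paper argues via the explicit formula \eqref{cas1trig}: it notes that $r_1(\varepsilon)$ is increasing in $\varepsilon$, partitions $[1/4,1]$ into short subintervals, and on each one checks numerically that the minimum of $r_1$ exceeds the maximum of $1+\sqrt{\varepsilon}$. You instead evaluate the characteristic cubic directly at the candidate lower bound: setting $s = \sqrt{\varepsilon}$ and expanding shows $p_{1+\varepsilon}(1+\sqrt{\varepsilon}) = -2\varepsilon^{3/2} - \tfrac{3}{2}\varepsilon^2 < 0$ (I checked the cancellation of the $s^0, s^1, s^2$ terms and it is right). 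Since $\Delta(p_z) = \tfrac{27}{4}(z^3-1) > 0$ the roots are three distinct reals, the cubic is $+\infty$ at $+\infty$, so there is a real root above $1+\sqrt{\varepsilon}$, and it is dominated by the largest root.

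Your approach is cleaner and strictly stronger than the paper's here: it is a single algebraic sign check, no numerical endpoint verification, and it proves $r_1 \ge 1+\sqrt{\varepsilon}$ for every $\varepsilon > 0$, so it would subsume Lemma \ref{extendingbound2} and the lower bound of Lemma \ref{lemtrig} as well. The one point you treat as implicit is that the quantity the paper calls $r_1$ in \eqref{cas1trig} is in fact the \emph{largest} real root; this is needed because Lemmas \ref{lowerboundc1} and \ref{estr2r3c2c3} refer to the roots via those explicit formulas. That identification is easy to supply: for $\delta > 0$ we have $\tfrac{2}{3}\arccot(\sqrt{\delta}) \in (0,\pi/3)$, and of the three arguments $\tfrac{2}{3}\arccot(\sqrt{\delta}) + j\tfrac{2\pi}{3}$ ($j=0,\pm 1$) the one with $j=0$ has the smallest absolute value, hence the largest cosine, so the formula's $r_1$ is indeed the largest root. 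With that one sentence added your argument is complete. (As a side note, the interval indexing in the paper's own proof appears to contain an off-by-one typo; your argument avoids the issue entirely.)
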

\begin{proof}[Proof of Lemma \ref{extendingbound}]
From  \eqref{cas1trig} it is clear that $r_1$ is an increasing function of $\delta$, which is an increasing function of $\varepsilon$. Hence $r_1 = r_1(\varepsilon)$ is an increasing function of $\varepsilon$.
For $1/4+k/8 \le \varepsilon \le 1/4+(k+1)/8$ the function $r_1(\varepsilon)$ is bounded below by $r_1(1/4+k/8)$ while $1+\sqrt{\varepsilon}$ is bounded above by $1+\sqrt{1/4+(k+1)/8}$. 
One can verify that 
$$
 r_1(1/4+k/8) - 1+\sqrt{1/4 + (k+1)/8} > 0,
\quad \text{for} \quad  k \in \{1,\ldots,7\},
$$
which implies $r_1(\varepsilon) \ge 1+\sqrt{\varepsilon}$ on $[1/4,1]$.
\end{proof}

\begin{lemma} \label{extendingbound2}
Let $z = 1 + \varepsilon$. Then
$r_1 \ge 1 + \sqrt{\varepsilon}$, for $\varepsilon \ge 1$.
\end{lemma}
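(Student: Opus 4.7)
The plan is to reduce the claim to the simpler assertion $r_1 \ge 1+\varepsilon$, which is considerably easier to extract from the explicit formula \eqref{cas1trig}. Observe that when $\varepsilon \ge 1$ we have $\sqrt{\varepsilon} \le \varepsilon$, so $1+\sqrt{\varepsilon} \le 1+\varepsilon$, and the lemma would follow immediately from $r_1 \ge 1+\varepsilon$.

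To verify $r_1 \ge 1+\varepsilon$, I would read off from \eqref{cas1trig} that
$$
r_1 = \frac{1}{2}(1+\delta)^{1/3}\left(1 + 2\cos\left(\tfrac{2}{3}\arccot\sqrt{\delta}\right)\right),
$$
and recall that $(1+\delta)^{1/3} = z = 1+\varepsilon$ by definition. For $\delta \ge 0$ one has $\arccot\sqrt{\delta} \in (0,\pi/2]$, so $\tfrac{2}{3}\arccot\sqrt{\delta} \in (0,\pi/3]$. Since $\cos$ is decreasing on $[0,\pi/3]$, it follows that $\cos\!\left(\tfrac{2}{3}\arccot\sqrt{\delta}\right) \ge \cos(\pi/3) = 1/2$, and substituting this into the formula yields $r_1 \ge \tfrac{1}{2}(1+\varepsilon)\cdot 2 = 1+\varepsilon$. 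Combined with the preceding observation, this gives $r_1 \ge 1+\varepsilon \ge 1+\sqrt{\varepsilon}$ for $\varepsilon \ge 1$, as required.

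There is essentially no obstacle here; the only ingredient beyond the trigonometric formula \eqref{cas1trig} is the monotonicity of cosine on $[0,\pi/3]$. An alternative route, which would work uniformly for all $\varepsilon \ge 0$, is to directly expand $p_z(1+\sqrt{\varepsilon})$ with $z = 1+\varepsilon$ and observe the cancellations collapse it to $-2\varepsilon^{3/2}-\tfrac{3}{2}\varepsilon^2 \le 0$, so that the intermediate value theorem, together with the fact that $r_1$ is the largest real root of $p_z$ (visible from the angular ranges in \eqref{cas1trig}), gives $r_1 \ge 1+\sqrt{\varepsilon}$. The shortcut above, however, avoids any polynomial manipulation in the regime $\varepsilon \ge 1$ targeted by the lemma, and seems to be the cleanest completion of the three-part argument following Lemmas \ref{lemtrig} and \ref{extendingbound}.
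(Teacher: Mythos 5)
Your proof is correct, and it takes a genuinely different and cleaner route than the paper's. The paper's argument uses the monotonicity in $\delta$ of both factors in the product formula \eqref{cas1trig}, evaluates the cosine factor numerically at the boundary value $\delta = 7$ (which corresponds to $\varepsilon = 1$), and then compares $(1+\varepsilon)$ times that numerical constant against $1+\sqrt{\varepsilon}$. You instead observe that for every $\delta \ge 0$ the angle $\tfrac{2}{3}\arccot\sqrt{\delta}$ lies in $(0,\pi/3]$, so $\cos(\cdot) \ge 1/2$ and hence $1 + 2\cos(\cdot) \ge 2$, which immediately yields the uniform bound $r_1 \ge (1+\delta)^{1/3} = 1+\varepsilon$; combining with the trivial $1+\varepsilon \ge 1+\sqrt{\varepsilon}$ for $\varepsilon \ge 1$ finishes. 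This avoids any numerical approximation and shows that the content of the lemma is really just the elementary inequality $\sqrt{\varepsilon} \le \varepsilon$ in this regime. Your alternative route via the sign of $p_z(1+\sqrt{\varepsilon}) = -2\varepsilon^{3/2} - \tfrac{3}{2}\varepsilon^2 \le 0$ together with IVT and the fact that $r_1$ is the largest real root (visible from the angular ranges in \eqref{cas1trig}) is also correct and has the further virtue of working uniformly for all $\varepsilon \ge 0$, subsuming Lemmas \ref{lemtrig} and \ref{extendingbound} as well; it would be a reasonable candidate to replace the three-part case analysis if one wished to streamline the appendix.
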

\begin{proof}
Recall that $1+\varepsilon = (1+\delta)^{1/3}$. 
From  \eqref{cas1trig}, $r_1$ is a product of $(\delta+1)^{1/3}$ and $(1+2\cos \left( \frac{2}{3} \arccot \left( \sqrt{\delta} \right) \right))/2$, which are both increasing functions of $\delta$. Note $\delta= 7$, that corresponds to $\varepsilon =1$. Using the fact that
$\left(1+2\cos \left( \frac{2}{3} \arccot \left( \sqrt{7} \right)\right) \right) \approx
1.47112$ gives
$$
r_1 \ge (1+\delta)^{1/3} \frac{1}{2}\left(1+2\cos \left( \frac{2}{3} \arccot \left( \sqrt{7} \right)\right) \right) \ge (1+\delta)^{1/3} \cdot 1.47.
$$
Using the fact that $\delta = 3\varepsilon + 3 \varepsilon^2 + \varepsilon^3$ gives
$$
r_1 \ge (1+3\varepsilon + 3 \varepsilon^2 + \varepsilon^3)^{1/3}\cdot 1.47 \ge 1 + \sqrt{\varepsilon},
\quad \text{for} \quad \varepsilon \ge 1.
$$
\end{proof}

\begin{lemma} \label{lowerboundc1}
Let $z = 1+\varepsilon$. We have
$c_1 \ge 1/3$, for $\varepsilon \ge 0.$
\end{lemma}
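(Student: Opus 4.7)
The plan is to invoke the explicit trigonometric formula for $c_1$ derived in Appendix \ref{appendixA}, and then verify the inequality essentially by sign inspection. Specifically, setting $z = 1 + \varepsilon$ corresponds to taking $\delta = 3\varepsilon + 3\varepsilon^2 + \varepsilon^3 \ge 0$ in the parameterization $z = (1+\delta)^{1/3}$. With this substitution, \eqref{cas2trig} gives
\[
c_1 = \frac{1}{3}\left(1 + (\delta+1)^{1/2}\sin\!\left(\tfrac{1}{3}\arccot(\sqrt{\delta})\right)\right),
\]
so the claim $c_1 \ge 1/3$ reduces to showing that
\[
(\delta+1)^{1/2}\sin\!\left(\tfrac{1}{3}\arccot(\sqrt{\delta})\right) \ge 0
\quad \text{for all } \delta \ge 0.
\]

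The factor $(\delta+1)^{1/2}$ is strictly positive for $\delta \ge 0$, so the sign is determined entirely by the sine term. For $\delta \ge 0$, the standard branch of $\arccot$ satisfies $\arccot(\sqrt{\delta}) \in (0,\pi/2]$, so $\tfrac{1}{3}\arccot(\sqrt{\delta}) \in (0,\pi/6]$, which lies strictly inside $(0,\pi/2)$. Hence the sine is strictly positive on this range, and the inequality follows at once.

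I expect no real obstacle here: once the formula from Appendix \ref{appendixA} is in hand, the lemma is just a matter of checking that the argument of the sine stays inside the first quadrant for the relevant range of $\delta$. For concreteness, it may be worth noting the boundary value $c_1 = 1/2$ at $\delta = 0$ (i.e., $\varepsilon = 0$), obtained from $\sin(\pi/6) = 1/2$, which shows that the bound $1/3$ is not tight but is adequate for the application in Lemma \ref{step2}.
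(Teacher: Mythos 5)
Your proof is correct and follows essentially the same route as the paper: invoke the closed-form expression for $c_1$ from \eqref{cas2trig}, observe that $\tfrac{1}{3}\arccot(\sqrt{\delta})$ lies in $(0,\pi/6]$ for $\delta \ge 0$, and conclude the sine factor is nonnegative. The extra observations (the $\delta$--$\varepsilon$ relation and the boundary value $c_1 = 1/2$ at $\varepsilon = 0$) are harmless additions but do not change the argument.
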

\begin{proof}[Proof of Lemma \ref{lowerboundc1}]
This result is a consequence of the formula for $c_1$ in \eqref{cas2trig}: note that $\arccot(\sqrt{\delta})/3 \in [0,\pi/6]$, which implies that the sine of this quantity is nonnegative. We conclude that
$c_1 \ge 1/3$.
\end{proof}

\begin{lemma}\label{estr2r3c2c3}
Let $z = 1+\varepsilon$. Then,
$r_3 > -r_2 > 0$ and  $c_3 > -c_2 > 0$ for $\varepsilon > 0.$
\end{lemma}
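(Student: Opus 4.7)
The plan is to work directly from the trigonometric formulas in \eqref{cas1trig} and \eqref{cas2trig} under the substitution $\psi := \arccot(\sqrt{\delta})$, which lies in $(0, \pi/2)$ for $\delta > 0$. The arguments appearing in the roots are built from $\theta := 2\psi/3 \in (0, \pi/3)$, and those in the coefficients from $\phi := \psi/3 \in (0, \pi/6)$. The two essential tools are the sum-to-product identities and the identity $\sqrt{1+\delta} = \csc\psi$, which follows from $\cot\psi = \sqrt{\delta}$.

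For the roots, I would first compute $r_2 + r_3 = (1+\delta)^{1/3}(1 - \cos\theta)$ by using $\cos(\theta+2\pi/3) + \cos(\theta - 2\pi/3) = 2\cos\theta\cos(2\pi/3) = -\cos\theta$. This is strictly positive on $\theta \in (0, \pi/3)$, so $r_3 > -r_2$. Separately, $\theta + 2\pi/3 \in (2\pi/3, \pi)$ gives $\cos(\theta + 2\pi/3) < -1/2$, so $r_2 < 0$. Together these yield $r_3 > -r_2 > 0$.

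For the coefficients, the analogous sum-to-product calculation gives
\[
c_2 + c_3 = \tfrac{1}{3}\bigl(2 - \sqrt{1+\delta}\,\sin\phi\bigr) = \tfrac{1}{3}\bigl(2 - \sin(\psi/3)/\sin\psi\bigr).
\]
The triple-angle identity $\sin\psi = \sin(\psi/3)\bigl(3 - 4\sin^2(\psi/3)\bigr)$ rewrites the ratio as $1/(3 - 4\sin^2(\psi/3))$, which is at most $1/2$ since $\sin^2(\psi/3) < 1/4$ on $(0, \pi/2)$. Hence $c_2 + c_3 > 0$, which gives $c_3 > -c_2$.

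I expect the main obstacle to be showing $c_2 < 0$. Using $\sqrt{1+\delta} = \csc\psi$ once more, this amounts to the sharp inequality $\sin(2\pi/3 - \psi/3) > \sin\psi$ on $(0, \pi/2)$. The cleanest route is a third sum-to-product step, which factors the difference as $2\cos(\pi/3 + \psi/3)\sin(\pi/3 - 2\psi/3)$. Both factors are strictly positive on $(0, \pi/2)$, since $\pi/3 + \psi/3 \in (\pi/3, \pi/2)$ and $\pi/3 - 2\psi/3 \in (0, \pi/3)$. Combined with the previous paragraph this gives $c_3 > -c_2 > 0$, completing the plan.
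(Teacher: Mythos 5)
Your proposal is correct and follows the same basic route as the paper: substitute the angle $\psi=\arccot(\sqrt{\delta})$ and reduce the root and coefficient inequalities to trigonometric statements on a fixed interval. The paper stops at asserting the trigonometric inequality for the roots is ``straightforward to verify'' and that the coefficient case ``can similarly be established,'' whereas you supply the verification. It is worth noting that the ``similarly'' hides a real gap: for the roots the prefactor $\tfrac12(\delta+1)^{1/3}$ is common to $r_2$ and $r_3$ and can be dropped, but for the coefficients the factor $\sqrt{1+\delta}$ multiplies only the sine term inside $c_2$ and $c_3$ and cannot simply be ignored. Your identity $\sqrt{1+\delta}=\csc\psi$ (from $\cot\psi=\sqrt{\delta}$) is exactly what converts this into a $\delta$-free trigonometric statement, after which the triple-angle identity $\sin\psi=\sin(\psi/3)\bigl(3-4\sin^2(\psi/3)\bigr)$ controls $c_2+c_3$ and the sum-to-product factorization $\sin(2\pi/3-\psi/3)-\sin\psi=2\cos(\pi/3+\psi/3)\sin(\pi/3-2\psi/3)$ handles $c_2<0$. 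So your argument is not a different route, but it is a genuinely more complete one than the paper's sketch.
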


\begin{proof}[Proof of Lemma \ref{estr2r3c2c3}]
This result is a consequence of the formulas for $r_2,r_3$ and $c_2,c_3$ in \eqref{cas1trig} and \eqref{cas2trig}, respectively. Indeed, in order to observe that $r_3 > -r_2 > 0$, we can ignore the leading factor of $(1/2) (1+\delta)^{1/3}$, which is the same for both $r_2$ and $r_3$ and perform the substitution $x = (2/3) \arccot(\sqrt{\delta})$. With this substitution the inequality $r_3 > -r_2 > 0$ is equivalent to the inequality 
$$
1 + 2 \cos\left(x - \frac{2\pi}{3} \right) > - \left(1+ 2\cos\left(x + \frac{2 \pi}{3} \right) \right) > 0, \quad \text{for} \quad x \in [0,\pi/3], 
$$
which is straightforward to verify.  
The inequality $c_3 > -c_2 > 0$ can similarly be established.
\end{proof}

\end{document}